\documentclass[a4paper,12pt]{amsart}
\usepackage{amsmath}
\usepackage{amsthm}
\usepackage{amscd}
\usepackage{amssymb}
\usepackage{amsfonts}
\usepackage{latexsym}
\usepackage[mathscr]{eucal}

\newtheorem{thm}{Theorem}[section]
\newtheorem{prop}[thm]{Proposition}
\newtheorem{lemma}[thm]{Lemma}
\newtheorem{cor}[thm]{Corollary}

\def\graph{\mathop{\rm {graph}}\nolimits}

\def\Im{\mathop{\rm {Im}}\nolimits}
\def\codim{\mathop{\rm {codim}}\nolimits}

\pagestyle{plain}

\begin{document}
\title{Relative position of three subspaces
in a Hilbert space}
\author{Masatoshi Enomoto}
\address[Masatoshi Enomoto]{
1-6-13, Nogami, Takarazuka,Hyogo, Japan} 
\email{enomotoma@hotmail.co.jp}     

\author{Yasuo Watatani}
\address[Yasuo Watatani]{Department of Mathematical Sciences, 
Kyushu University, Hakozaki, 
Fukuoka, 812-8581,  Japan}
\email{watatani@math.kyushu-u.ac.jp}
\maketitle
\begin{abstract}
We study the relative position of
three subspaces in a separable 
infinite-dimensional Hilbert space. 
In the finite-dimensional case, Brenner 
described the general position of three 
subspaces completely. 
We extend it to a certain class of three subspaces in 
an infinite-dimensional Hilbert space. 
We also give a partial result which gives a condition 
on a system to have a (dense) decomposition containing a pentagon.

\medskip\par\noindent
KEYWORDS: three subspaces, Hilbert space,
 
\medskip\par\noindent
AMS SUBJECT CLASSIFICATION: 46C07, 47A15, 15A21, 16G20, 16G60.

\end{abstract}

\section{Introduction}
We study the relative position of
three subspaces in a separable 
infinite-dimensional Hilbert space. 

The relative position of one
subspace of a Hilbert space is extremely simple and determined 
by the dimension and the co-dimension of the subspace.
It is a well known fact that the relative position of  two subspaces
$E$ and $F$ in a Hilbert space $H$  can be described completely up to
unitary equivalence  as in Araki \cite{Ar},Davis \cite{Da}, 
Dixmier \cite{Di} and 
Halmos \cite{Ha1}.  The Hilbert
space is the direct sum of five subspaces:
$$
H = (E \cap F) \oplus (\text{the rest}) \oplus (E \cap F^{\perp})
\oplus (E^{\perp} \cap F) \oplus (E^{\perp} \cap F^{\perp}).
$$
In the rest part,  $E$ and $F$ are in generic position and the
relative position is described only by \lq\lq the angles'' between them.

We disregard \lq\lq the angles'' and study the still-remaining fundamental 
feature of the relative position of subspaces. This is the reason why  we 
use  bounded invertible operators instead of unitaries to define 
isomorphisms in our paper. 

Let $H$ be a Hilbert space and $E_1, \dots E_n$ be $n$ subspaces 
in $H$.  Then we say that  ${\mathcal S} = (H;E_1, \dots , E_n)$  
is a system of $n$ subspaces in $H$ or an $n$-subspace system in $H$.
Let ${\mathcal T} = (K;F_1, \dots , F_n)$  
be  another system of $n$-subspaces in a Hilbert space $K$. 
We say that systems ${\mathcal S}$ and ${\mathcal T}$ are 
{\it isomorphic} if there is a  bounded invertible 
operator $\varphi : H \rightarrow K$ satisfying that  
$\varphi(E_i) = F_i$ for $i = 1,\dots ,n$. See also 
Sunder \cite{S} for other topics on $n$-subspaces. 

In \cite{B}  S. Brenner gave a complete description of systems of 
three subspaces up to isomorphims when an ambient space $H$ is 
finite-dimensional.  

A system ${\mathcal S}$ is called indecomposable if ${\mathcal S}$ 
can not be decomposed into a nontrivial direct sum. If the 
ambient Hilbert space $H$ is finite-dimensional, then any system of 
$n$ subspaces in $H$ is a finite direct sum of indecomposable 
systems.

Let $\mathcal{S}=(H;E_{1},E_{2},E_{3})$
be an indecomposable system of three subspaces 
in a finite-dimensional Hilbert space $H$.
Then  $\mathcal S$  is isomorphic to 
one of the following eight trivial  systems 
${\mathcal S}_1, \dots, {\mathcal S}_8$  
 and one non-trivial  system ${\mathcal S}_9$:
\[
{\mathcal S}_1 = (\mathbb C;0,0,0),
\ \ {\mathcal S}_2 = (\mathbb C;\mathbb C,0,0),
\ \ {\mathcal S}_3 = (\mathbb C; 0,\mathbb C,0),
\]
\[
{\mathcal S}_4 = (\mathbb C; 0,0, \mathbb C), 
\ \ {\mathcal S}_5 = (\mathbb C;\mathbb C,\mathbb C,0),
\ \ {\mathcal S}_6 = (\mathbb C;\mathbb C,0,\mathbb C),
\]
\[
{\mathcal S}_7 = (\mathbb C; 0,\mathbb C,\mathbb C),
\ \ {\mathcal S}_8 = (\mathbb C; \mathbb C , \mathbb C,\mathbb C), 
\ \ {\mathcal S}_9 = (\mathbb C^2; \mathbb C (1,0), 
\mathbb C (0,1),\mathbb C (1,1)).
\]

See, for example,   \cite{GP}, \cite{HLR} or \cite{EW1} on indecomposable 
systems of $n$ subspaces. 

Therefore we have the following theorem of Brenner: 

\begin{thm}[Brenner \cite{B}]
Let $\mathcal{S}=(H;E_{1},E_{2},E_{3})$
be a system of three subspaces 
in a finite-dimensional Hilbert space $H$.
Then  $\mathcal S$  is isomorphic to the following 
${\mathcal T} = (H;F_1,F_2,F_3 )$ such that there exist 
subspaces $S, N_1,N_2, N_3, M_1, M_2, M_3, Q, L$ of $H$ satisfying 
that $Q$  has a form
$$
(Q; Q_1,Q_2, Q_3) := 
(K \oplus K ; K \oplus 0, 0 \oplus K, \{(x,x) \ | x \in K\})
$$
of double triangle and 
\begin{align*}
  H & = S \oplus N_1 \oplus N_2 \oplus N_3 \oplus 
      M_1 \oplus M_2 \oplus M_3 \oplus Q \oplus L \\
F_1 & = S \oplus 0_{ \ \ } \oplus N_2 \oplus N_3 \oplus 
      M_1 \oplus 0_{ \ \ } \oplus 0_{ \ \ } \oplus Q_1 \oplus 0_{ \ \ }\\  
F_2 & = S \oplus N_1 \oplus 0_{ \ \ } \oplus N_3 \oplus 
      0_{ \ \ } \oplus M_2 \oplus 0_{ \ \ } \oplus Q_2 \oplus 0_{ \ \ } \\  
F_3 & =  S \oplus N_1 \oplus N_2 \oplus 0_{ \ \ } \oplus 
     0_{ \ \ } \oplus 0_{ \ \ } \oplus M_3 \oplus Q_3 \oplus 0_{ \ \ } \\  
\end{align*} 

\end{thm}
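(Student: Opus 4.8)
The plan is to deduce the theorem from the two facts recalled just above its statement: every finite-dimensional three-subspace system is a finite direct sum of indecomposable ones, and each indecomposable is isomorphic to one of $\mathcal S_1,\dots,\mathcal S_9$. Granting these, decompose $\mathcal S\cong\bigoplus_k\mathcal S_{i_k}$ into indecomposables and collect the isotypic components: the copies of $\mathcal S_1$ assemble into $L$; those of $\mathcal S_2,\mathcal S_3,\mathcal S_4$ into $M_1,M_2,M_3$; those of $\mathcal S_5,\mathcal S_6,\mathcal S_7$ into $N_3,N_2,N_1$ respectively (since $\mathcal S_5=(\mathbb C;\mathbb C,\mathbb C,0)$ has its line in positions $1,2$, matching the block $N_3$, which by the stated form of $\mathcal T$ sits in $F_1,F_2$ but not $F_3$, and similarly for $N_2,N_1$); those of $\mathcal S_8$ into $S$; and the $k$ copies of $\mathcal S_9$ into a single block $Q$. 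For the last point one checks that $\bigoplus^k\mathcal S_9\cong(K\oplus K;\,K\oplus 0,\,0\oplus K,\,\{(x,x):x\in K\})$ with $\dim K=k$: splitting $K$ along any basis turns the right-hand side back into $k$ copies of $(\mathbb C^2;\mathbb C(1,0),\mathbb C(0,1),\mathbb C(1,1))=\mathcal S_9$. Substituting these blocks into $H=\bigoplus(\text{blocks})$ and reading off the three subspaces reproduces exactly the asserted form of $\mathcal T$, so the theorem is essentially a bookkeeping statement once the list of indecomposables is in hand.

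Since that list is the real substance, I would also sketch its proof. Let $\mathcal S$ be indecomposable with $\dim H\ge 1$. If $\dim H=1$, then $\mathcal S\in\{\mathcal S_1,\dots,\mathcal S_8\}$. If $\dim H\ge 2$: first, if some $E_i$ equals $0$ or $H$, the system essentially reduces to a system of two subspaces, which over a field splits into one-dimensional pieces (peel off $E\cap F$, complements of $E\cap F$ inside $E$ and inside $F$, and a complement of $E+F$), contradicting indecomposability; so $0\subsetneq E_i\subsetneq H$ for each $i$. Next, any complement $H'$ of $E_1\cap E_2\cap E_3$ gives $H=(E_1\cap E_2\cap E_3)\oplus H'$ with $E_i=(E_1\cap E_2\cap E_3)\oplus(E_i\cap H')$ for all $i$ (immediate dimension count, using $E_1\cap E_2\cap E_3\subseteq E_i$), so $\mathcal S$ has a direct summand isomorphic to a sum of copies of $\mathcal S_8$; likewise a complement of $E_1+E_2+E_3$ splits off a sum of copies of $\mathcal S_1$. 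Hence for indecomposable $\mathcal S$ with $\dim H\ge 2$ we may assume in addition $E_1\cap E_2\cap E_3=0$ and $E_1+E_2+E_3=H$.

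Now I force pairwise general position. If $W:=E_1\cap E_2\neq 0$, then $W\cap E_3=0$, so choose a complement $H'$ of $W$ in $H$ with $E_3\subseteq H'$; a dimension count gives $E_1=W\oplus(E_1\cap H')$ and $E_2=W\oplus(E_2\cap H')$, so $\mathcal S=(W;W,W,0)\oplus(H';E_1\cap H',E_2\cap H',E_3)$, a nontrivial decomposition (both summands are nonzero since $0\subsetneq W\subseteq E_1\subsetneq H$), a contradiction. Thus $E_i\cap E_j=0$ for all $i\neq j$; applying the same argument to the dual system $(H^\ast;E_1^\perp,E_2^\perp,E_3^\perp)$ — which is again indecomposable, has all subspaces proper and nonzero, and has trivial triple intersection — gives $E_i^\perp\cap E_j^\perp=0$, i.e.\ $E_i+E_j=H$, for all $i\neq j$. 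Consequently $\dim E_1=\dim E_2=\dim E_3=\tfrac{1}{2}\dim H=:d$; identifying $H=E_1\oplus E_2$ we may write $E_3=\{(x,Tx):x\in\mathbb C^d\}$ for some $T\in GL_d$ (invertibility from $E_3\cap E_1=0$), and the automorphism $(u,v)\mapsto(Tu,v)$ of $H$ carries $E_1,E_2$ to themselves and $E_3$ to the diagonal; hence $\mathcal S\cong(\mathbb C^d\oplus\mathbb C^d;\mathbb C^d\oplus 0,\,0\oplus\mathbb C^d,\,\{(x,x)\})\cong\bigoplus^d\mathcal S_9$, and indecomposability forces $d=1$, i.e.\ $\mathcal S\cong\mathcal S_9$.

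I expect the bookkeeping of the first paragraph to be routine; the only point there requiring a little care is the identification $\bigoplus^k\mathcal S_9\cong Q$. In the classification of indecomposables the delicate steps are choosing, at each reduction, a summand of the correct type so that a \emph{nonzero} complementary system remains, verifying the dimension counts that make the chosen complement split the other two subspaces, and checking that passing to the dual system preserves indecomposability. An alternative that bypasses these hands-on reductions is to view a three-subspace system as a monic representation of the Dynkin quiver $D_4$ with arrows oriented toward the center; Gabriel's theorem then gives finitely many indecomposables indexed by the positive roots of $D_4$, and the nine of them with nonzero central coordinate are precisely $\mathcal S_1,\dots,\mathcal S_9$. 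Either way the bookkeeping above finishes the proof.
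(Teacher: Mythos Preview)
Your argument is correct and matches the paper's own derivation of this theorem: the paper obtains it by exactly the same bookkeeping---decompose into indecomposables and group the isotypic pieces---the word ``Therefore'' preceding the theorem statement is the entire proof, with the list $\mathcal S_1,\dots,\mathcal S_9$ taken as input. You go further than the paper does at that point by sketching why those nine exhaust the indecomposables (the paper only cites this), and your sketch is sound; the one step that deserves a remark is that the splittings $E_i=W\oplus(E_i\cap H')$ follow not from a dimension count but directly from $W\subseteq E_i$, and the duality step is cleanest phrased via annihilators in $H^*$ rather than orthogonal complements, since indecomposability is not obviously preserved under $E_i\mapsto E_i^{\perp}$ in the same $H$.

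It is worth noting that the paper later supplies a second, self-contained proof (the Corollary in Section~4) that does \emph{not} pass through the classification of indecomposables: there the blocks $S,N_i,M_i,Q_i,L$ are built directly from lattice operations on the $E_i$ (e.g.\ $S=E_1\cap E_2\cap E_3$, $M_i=E_i\cap(E_i\cap(E_j+E_k))^{\perp}$, and $Q_3$ is carved out of $E_3\cap(E_1+E_2)$ modulo $(E_3\cap E_1)+(E_3\cap E_2)$), followed by a linear-independence check. That constructive route is what the paper is able to push to infinite dimensions under closedness hypotheses, which your indecomposable-based argument cannot do.
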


\noindent
{\bf Remark.}  In the above decomposition, we can choose 
${\mathcal T}$ such that 
 $S = F_1 \cap F_2 \cap F_3$,  
     $N_1 = F_1^{\perp} \cap F_2 \cap F_3$,
     $N_2 = F_1 \cap F_2^{\perp} \cap F_3$,
     $N_3 = F_1 \cap F_2 \cap F_3^{\perp}$,
     $M_1 = F_1 \cap F_2^{\perp} \cap F_3^{\perp}$
     $M_2 = F_1^{\perp} \cap F_2 \cap F_3^{\perp}$,
     $M_3 = F_1^{\perp} \cap F_2^{\perp} \cap F_3$ 
and  $L = F_1^{\perp} \cap F_2^{\perp} \cap F_3^{\perp}$ . 
But we should be careful that the isomorphism by an 
invertible operator does not 
preserve the orthogonality.

The aim of our paper is to extend the Brenner's theorem 
to a certain class of three subspaces in 
an infinite-dimensional Hilbert space.

The above Brenner's theorem says that any system of three 
subspaces of a finite-dimensional Hilbert space is decomposed as a
direct sum of a distributive part (or Boolean part) 
$$
S \oplus N_1 \oplus N_2 \oplus N_3 \oplus 
      M_1 \oplus M_2 \oplus M_3 \oplus L 
$$
and a 
non-distributive part $Q$. 
Furthermore the non-distributive part 
$Q = K \oplus K$ has a  typical form
$$
(K \oplus K ; K \oplus 0, 0 \oplus K, \{(x,x) \ | x \in K\})
$$
of double triangle. The double triangle is the {\it only} 
obstruction of distributive law in finite-dimensional case. 
We study this type of decomposition for a certain class of 
systems of three subspaces for an infinite-dimensional 
Hilbert space. In order to proceed this type of  decomposition, 
we should recall the following basic facts on the subspace lattice 
structure: In general, a lattice is distributive if and only if it has 
neither a double triangle nor a pentagon as a sublattice, 
see \cite{Gr} for example.  In 
the subspace lattices of an infinite dimensional Hilbert space, 
there occur both double triangles and pentagons. 
A von Neumann algebra $M$ 
is commutative  if and only if the lattice of the projections in $M$ 
is distributive.  
A von Neumann algebra $M$ 
is finite if and only if the lattice of the projections in $M$ 
has no pentagons if and only if the lattice of the projections in $M$ 
is modular. Therefore we understand that the general case is far 
beyond having a Brenner type decomposition. 
 
For any bounded linear operator $A$ on a Hilbert space $K$, we can 
associate a system ${\mathcal S}_A$ of four subspaces in 
$H = K \oplus K$ by 
\[
{\mathcal S}_A = (H;K\oplus 0,0\oplus K,\graph A, \{(x,x) ; x \in K\}).
\]
Two such systems ${\mathcal S}_A$ and ${\mathcal S}_B$  are isomorphic 
if and only if the two operators $A$ and $B$ are similar. The 
direct sum of such  systems corresponds to the direct sum of 
the operators. In this sense the theory of operators is 
included into the theory of relative positions of four subspaces. 
In particular on a finite dimensional space, Jordan blocks correspond 
to indecomposable systems. Moreover on an infinite dimensional 
Hilbert space, the above system ${\mathcal S}_A$ is indecomposable 
if and only if $A$ is strongly irreducible, which is an 
infinite-dimensional analog of a Jordan block, see, for example, 
a monograph by Jiang and Wang \cite{JW}.

Halmos initiated the study of transitive lattices and gave an example of 
transitive lattice consisting of seven subspaces in \cite{Ha2}. 
Harrison-Radjavi-Rosenthal \cite{HRR} constructed a transitive lattice  
consisting of six subspaces using the graph of an unbounded closed operator. 
Hadwin-Longstaff-Rosenthal found a transitive lattice of five  non-closed 
linear subspaces in \cite{HLR}.     
Any finite transitive lattice which consists of  $n$ subspaces of a Hilbert 
space $H$  
gives an indecomposable system of $n-2$ subspaces by withdrawing 
$0$ and $H$,  but the converse is not true. It is still unknown whether or not 
there exists a transitive lattice 
consisting of five subspaces.  Therefore it is also an interesting  problem to know whether there exists an indecomposable system of three subspaces in an infinite-dimensional Hilbert space.  
 
Throughout the paper a projection means an operator 
$e$ with $e^2 = e = e^*$ and 
an idempotent  means an operator $p$ with $p^2 = p$. 
The direct sum $\oplus$ is the orthogonal direct sum and 
$\oplus _{alg}$ is the algebraic direct sum. The subspace  
mostly means  closed subspace except the algebraic 
direct sum.

There seems to be interesting relations with the study of 
representations of $*$-algebras generated by idempotents by 
S. Kruglyak and Y. Samoilenko \cite{KS} and the study on sums of 
projections by S. Kruglyak, V. Rabanovich and Y. Samoilenko 
\cite{KRS}. But we do not know the exact implication, because 
their objects are different with ours. 

In finite dimensional case, the classification of four subspaces is 
described as  the classification of the representations  of the 
extended Dynkin diagram $D_4^{(1)}$. Recall that Gabriel \cite{G} 
listed Dynkin diagrams $A_n, D_n, E_6, E_7,E_8$ in his theory on 
finiteness of indecomposable representations of quivers.  
We discussed on indecomposable representations of 
quivers on infinite-dimensional Hilbert spaces 
 \cite{EW2}. We are also under the influence of subfactor theory by 
Jones \cite{J}. 

Our study also has a relation  with $C^*$-algebras generated by idempotents 
or projections. 
See Bottcher, Gohberg, Karlovich, Krupnik, Roch , Silbermann and 
Spittovsky \cite{BGKKRSS} , Hu and Xue \cite{HX} and references there. 

After we completed our paper, we noticed  that 
there might be  a connection with Kadison-Singer algebras introduced by 
Ge and Yuan \cite{GY1} and \cite{GY2}. 

The authors are supported by JSPS KAKENHI Grant number 23654053 and 
25287019.

\section{systems of $n$ subspaces}
We introduce  some basic definitions and facts on 
the relative position of $n$ subspaces
in a separable  Hilbert space.  
Let $H$ be a Hilbert space and $E_1, \dots E_n$ be $n$ subspaces 
in $H$.  Then we say that  ${\mathcal S} = (H;E_1, \dots , E_n)$  
is a system of $n$-subspaces in $H$ or an $n$-subspace system in $H$. 
Let ${\mathcal T} = (K;F_1, \dots , F_n)$  
be  another system of $n$-subspaces in a Hilbert space $K$. Then  
$\varphi : {\mathcal S} \rightarrow {\mathcal T}$ is called a 
homomorphism if $\varphi : H \rightarrow K$ is a bounded linear 
operator satisfying that  
$\varphi(E_i) \subset F_i$ for $i = 1,\dots ,n$. And 
$\varphi : {\mathcal S} \rightarrow {\mathcal T}$
is called an isomorphism if $\varphi : H \rightarrow K$ is 
an invertible (i.e., bounded  bijective) linear 
operator satisfying that  
$\varphi(E_i) = F_i$ for $i = 1,\dots ,n$. 
We say that systems ${\mathcal S}$ and ${\mathcal T}$ are 
{\it isomorphic} if there is an isomorphism  
$\varphi : {\mathcal S} \rightarrow {\mathcal T}$. This means 
that the relative positions of $n$ subspaces $(E_1, \dots , E_n)$ in $H$ 
and   $(F_1, \dots , F_n)$ in $K$ are same under disregarding angles. 
We say that systems ${\mathcal S}$ and ${\mathcal T}$ are 
{\it unitarily equivalent } if the above isomorphism 
$\varphi : H \rightarrow K$ can be chosen to be a unitary. 
This means that the relative positions of 
$n$ subspaces $(E_1, \dots , E_n)$ in $H$ 
and  $(F_1, \dots , F_n)$ in $K$ are same with preserving the angles
between the subspaces. We are interested in the relative position of 
subspaces up to isomorphism to study the still-remaining fundamental 
feature of the relative position after disregarding 
 \lq\lq the angles'' .  

We denote by 
$Hom(\mathcal S, \mathcal T)$ the set of homomorphisms of 
$\mathcal S$ to $\mathcal T$ and  
$End(\mathcal S) := Hom(\mathcal S, \mathcal S)$ 
the set of endomorphisms on $\mathcal S$. 

Let $G_2 = \mathbb Z/ 2\mathbb Z * \mathbb Z/ 2\mathbb Z =  
\langle a_1, a_2 \rangle$
be the free product of the cyclic groups of order two with generators 
$a_1$ and $a_2$. 
For two subspaces $E_1$ and $E_2$ of a Hilbert space $H$, 
let $e_1$ and $e_2$ be the projections onto $E_1$ and $E_2$.  
Then  $u_1 = 2e_1 -I$ and $u_2 = 2e_2 - I$ are self-adjoint unitaries.   
Thus there is a bijective correspondence between the set 
${\mathcal Sys}^2(H)$ of systems 
${\mathcal S} = (H;E_1,E_2)$ of two subspaces in a Hilbert space $H$ 
and the set $Rep(G_2,H)$ of unitary representations $\pi$ of $G_2$ on $H$ 
such that  $\pi(a_1) = u_1$ and $\pi(a_2) = u_2$.
Similarly let $G_n = \mathbb Z/ 2\mathbb Z * ... * \mathbb Z/ 2\mathbb Z$
be the $n$-times free product of the cyclic groups of order two. Then
there is a bijective correspondence between the set 
${\mathcal Sys}^n(H)$ of systems of $n$
subspaces in a Hilbert space $H$  and the set 
$Rep(G_n,H)$ of unitary representations of $G_n$ on $H$.  

\bigskip
\noindent
{\bf Example 1}.  Let $H = \mathbb C ^2$. Fix an angle $\theta$
with $0 < \theta < \pi /2$.  Put 
$E_1 = \{ \lambda(1,0) \ | \ \lambda \in \mathbb C\}$ and
$E_2 = \{ \lambda (cos\theta, sin\theta)\ | \ \lambda \in \mathbb C\}$.  Then
${\mathcal S}_1 = (H;E_1,E_2)$ is isomorphic to 
${\mathcal S}_2 = 
({\mathbb C}^2 ; {\mathbb C} \oplus 0, 0 \oplus {\mathbb C})$.
But the corresponding two unitary representations $\pi_1$ and 
$\pi_2$ are not similar, because 
$\frac{1}{2}(\pi_1(a_1) + 1) \frac{1}{2}(\pi_1(a_2) + 1)
 \not= 0$
and 
$\frac{1}{2}(\pi_2(a_1) + 1) \frac{1}{2}(\pi_2(a_2) + 1) =  0$.

We start with known facts to recall some notation. 
See \cite{EW1} for example. 

Let $H$ be a Hilbert space and $H_1$ and $H_2$ be 
two subspaces of $H$. We write  $H_1 \vee H_2 := \overline{H_1 + H_2}$ and 
$H_1 \wedge H_2:= H_1 \cap H_2$. Then the set of (closed) subspaces 
of $H$ forms a lattice under these operations $\vee$ and $\wedge$. 
Two subspaces $H_1$ and $H_2$ are said to be topologically 
complementary if $H = H_1 \vee H_2$ \ and  \ $H_1 \wedge H_2 = 0$. 
Two subspaces $H_1$ and $H_2$ are said to be algebraically 
complementary if $H = H_1 + H_2$ \ and  \ $H_1 \cap H_2 = 0$.

\begin{lemma}  Let $H$ be a Hilbert space and $H_1$ and $H_2$ be 
two subspaces of $H$.  Then the following are equivalent:
\begin{enumerate}
\item $H_1$ and $H_2$ are algebraically complementary, i.e., 
$H = H_1 + H_2$ \ and \ $H_1 \cap H_2 = 0$. 
\item There exists a closed subspace $M \subset H$ such that
$(H;H_1,H_2)$ is isomorphic to  $(H;M,M^{\perp})$
\item There exists an idempotent $P \in B(H)$ such that 
$H_1 = \Im P$ and $H_2 = \Im (1-P)$. 
\end{enumerate}
\label{lemma:decompose}
\end{lemma}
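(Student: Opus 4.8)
The plan is to prove the cyclic chain of implications $(1)\Rightarrow(3)\Rightarrow(2)\Rightarrow(1)$. Of these, the only step where the completeness of $H$ genuinely enters — and the one I would single out as the crux — is the boundedness of the projection built in $(1)\Rightarrow(3)$, which I would get from the closed graph theorem; the remaining steps are purely algebraic.

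For $(1)\Rightarrow(3)$: assume $H=H_1+H_2$ and $H_1\cap H_2=0$. Then every $x\in H$ has a \emph{unique} decomposition $x=x_1+x_2$ with $x_i\in H_i$, and we define $Px:=x_1$. This $P$ is linear and satisfies $P^2=P$, $\Im P=H_1$, $\Im(I-P)=H_2$ (the inclusions $\Im P\subseteq H_1$, $\Im(I-P)\subseteq H_2$ are clear, and $x\in H_1$ gives $Px=x$, similarly for $H_2$). To see $P$ is bounded I would use the closed graph theorem: if $x_n\to x$ and $Px_n\to y$, then $y\in H_1$ since $H_1$ is closed, while $(I-P)x_n=x_n-Px_n\to x-y$ forces $x-y\in H_2$ since $H_2$ is closed; uniqueness of the decomposition $x=y+(x-y)$ then gives $Px=y$, so the graph of $P$ is closed and $P\in B(H)$.

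For $(3)\Rightarrow(2)$: let $P\in B(H)$ be idempotent with $H_1=\Im P$ and $H_2=\Im(I-P)$. Since $\Im P=\Ker(I-P)$ and $\Im(I-P)=\Ker P$, both are closed. Put $M:=\Im P$ and let $e\in B(H)$ be the orthogonal projection onto $M$. Because $M=\Im P$ and $P$ acts as the identity on its range, one checks directly that $eP=P$ and $Pe=e$. Set $\varphi:=I+P-e$. A short computation using these relations gives $(I+P-e)(I+e-P)=(I+e-P)(I+P-e)=I$, so $\varphi$ is invertible with $\varphi^{-1}=I+e-P$; moreover $P\varphi^{-1}=P+Pe-P^2=e$, hence $\varphi P\varphi^{-1}=\varphi e=e+Pe-e^2=e$. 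Therefore $\varphi(\Im P)=\Im(\varphi P\varphi^{-1})=\Im e=M$ and $\varphi(\Im(I-P))=\Im(I-e)=M^{\perp}$, so $\varphi$ is an isomorphism of $(H;H_1,H_2)$ onto $(H;M,M^{\perp})$.

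For $(2)\Rightarrow(1)$: suppose there is a closed subspace $M$ and an invertible $\varphi\in B(H)$ with $\varphi(H_1)=M$ and $\varphi(H_2)=M^{\perp}$. Since $H=M+M^{\perp}$ and $M\cap M^{\perp}=0$, applying the linear bijection $\varphi^{-1}$ yields $H_1+H_2=\varphi^{-1}(M)+\varphi^{-1}(M^{\perp})=\varphi^{-1}(M+M^{\perp})=H$ and $H_1\cap H_2=\varphi^{-1}(M\cap M^{\perp})=0$, so $H_1$ and $H_2$ are algebraically complementary, completing the cycle.
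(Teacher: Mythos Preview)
Your proof is correct. The paper does not actually prove this lemma: it is stated as a known fact (``We start with known facts to recall some notation. See \cite{EW1} for example'') and no proof is given. Your argument is the standard one --- the closed graph theorem for $(1)\Rightarrow(3)$, the explicit similarity $\varphi=I+P-e$ intertwining the idempotent $P$ with the orthogonal projection $e$ onto $\Im P$ for $(3)\Rightarrow(2)$, and the trivial direction $(2)\Rightarrow(1)$ --- and all the computations check out (in particular the identities $eP=P$, $Pe=e$, and $(I+P-e)(I+e-P)=I$ are correct). There is nothing to compare against in the paper itself.
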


\begin{lemma}[\cite{EW1}] Let $H$ and $K$ be Hilbert spaces and 
$E \subset H$ and $F \subset K$ be closed subspaces of $H$ and $K$.  
Let $e \in B(H)$ and $f \in B(K)$ be the projections onto $E$ and $F$. 
Then the following are equivalent:
\begin{enumerate}
\item There exists an invertible operator $T: H \rightarrow K$
such that $T(E)=F$. 
\item There exists an invertible operator $T: H \rightarrow K$
such that $e =(T^{-1}fT)e$ and $f=(TeT^{-1})f$. 
\end{enumerate}
\end{lemma}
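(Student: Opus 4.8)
The plan is to prove the sharper statement that for a \emph{fixed} invertible operator $T\colon H\to K$ one has $T(E)=F$ if and only if the two identities $e=(T^{-1}fT)e$ and $f=(TeT^{-1})f$ both hold; the claimed equivalence of the two existential assertions is then immediate. Throughout I would use that $E=\Im e$ and $F=\Im f$ (since $e,f$ are the projections onto the closed subspaces $E,F$) and that $T$ invertible with $T(E)=F$ forces $T^{-1}(F)=E$.

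First I would treat $(1)\Rightarrow(2)$. Assuming $T(E)=F$: for $h\in H$ the vector $eh$ lies in $E$, hence $T(eh)\in F=\Im f$, so $f(T(eh))=T(eh)$, and applying $T^{-1}$ gives $(T^{-1}fT)(eh)=eh$; as $h$ is arbitrary this is the operator identity $(T^{-1}fT)e=e$. The identity $(TeT^{-1})f=f$ is obtained symmetrically, running the same computation with $(E,e,T)$ replaced by $(F,f,T^{-1})$ and using $T^{-1}(F)=E$: for $k\in K$, $fk\in F$ gives $T^{-1}(fk)\in E=\Im e$, so $e(T^{-1}fk)=T^{-1}fk$, and applying $T$ yields $(TeT^{-1})f=f$.

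Next, for $(2)\Rightarrow(1)$: multiplying $e=(T^{-1}fT)e$ on the left by $T$ yields $Te=fTe$, whence $T(E)=\Im(Te)\subseteq\Im f=F$. Symmetrically, multiplying $f=(TeT^{-1})f$ on the left by $T^{-1}$ yields $T^{-1}f=eT^{-1}f$, whence $T^{-1}(F)=\Im(T^{-1}f)\subseteq\Im e=E$, i.e.\ $F\subseteq T(E)$. The two inclusions give $T(E)=F$, as required.

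The argument is elementary bookkeeping and I do not anticipate a genuine obstacle. The one point worth flagging is that each single identity in (2) encodes only one inclusion ($T(E)\subseteq F$ from the first, $F\subseteq T(E)$ from the second), so both are really needed; the content of the lemma is just that the geometric condition ``$T$ carries $E$ onto $F$'' admits a purely algebraic reformulation in terms of the projections $e$ and $f$, which is the form in which it is convenient to apply it later.
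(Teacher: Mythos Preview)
Your argument is correct and complete. In fact you prove the sharper pointwise statement (for a fixed invertible $T$, the condition $T(E)=F$ is equivalent to the pair of operator identities), from which the existential equivalence in the lemma follows at once; each direction is a straightforward translation between the geometric inclusion and the corresponding projection identity, and your observation that each of the two identities in (2) encodes exactly one of the inclusions $T(E)\subseteq F$ and $F\subseteq T(E)$ is the right way to see why both are needed.

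Note that the present paper does not supply a proof of this lemma: it is quoted from \cite{EW1} and stated without argument, so there is no in-paper proof to compare against. Your elementary verification is exactly the kind of proof one would expect for such a statement.
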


Using the above lemma, we can describe an isomorphism between 
two systems of $n$ subspaces in terms of operators only as follows:

\begin{cor}
Let $\mathcal S=(H;E_{1},\cdots,E_{n})$
and $\mathcal S^{\prime}
 =(H^{\prime};E_{1}^{\prime},\cdots ,E_{n}^{\prime})$
be two systems of n-subspaces. Let $e_i$ 
(resp. $e_{i}^{\prime}$) be the projection onto 
$E_i$ (resp. $E_{i}^{\prime}$) .
Then two systems $\mathcal S$ and $\mathcal S^{\prime}$ are 
isomorphic if and only if there exists an invertible operator
$T: H \rightarrow H^{\prime}$
such that $e_i =(T^{-1}e_{i}^{\prime}T)e_i$ and 
$e_{i}^{\prime} =(Te_iT^{-1})e_{i}^{\prime}$ for 
$i = 1, \dots, n$.  
\end{cor}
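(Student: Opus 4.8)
The plan is to deduce the corollary from the preceding Lemma (\cite{EW1}) applied one index at a time, the only real observation being that the equivalence in that Lemma holds for a \emph{fixed} invertible operator $T$, not merely in the existential sense in which it is stated. So the first step is to record the sharper form that the cited proof actually establishes: for closed subspaces $E\subset H$, $F\subset K$ with projections $e,f$, and for any invertible $T:H\rightarrow K$, one has $T(E)=F$ if and only if $e=(T^{-1}fT)e$ and $f=(TeT^{-1})f$. Indeed, if $T(E)=F$ then $ex\in E$ gives $T(ex)\in F$, hence $fTe=Te$, and multiplying on the left by $T^{-1}$ yields $e=(T^{-1}fT)e$; since $T$ is bijective, $T^{-1}(F)=E$, and the symmetric argument with $T^{-1}$ in place of $T$ gives $f=(TeT^{-1})f$. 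Conversely, $e=(T^{-1}fT)e$ forces $Tx=fTx\in F$ for $x\in E$, so $T(E)\subset F$, and $f=(TeT^{-1})f$ forces $T^{-1}y=eT^{-1}y\in E$ for $y\in F$, so $F\subset T(E)$; hence $T(E)=F$.

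Granting this refinement, the corollary is immediate. For the forward direction, an isomorphism $\mathcal S\cong\mathcal S'$ is by definition a single invertible $T:H\rightarrow H'$ with $T(E_i)=E_i'$ for all $i$; applying the fixed-$T$ statement to each pair $(E_i,E_i')$ gives $e_i=(T^{-1}e_i'T)e_i$ and $e_i'=(Te_iT^{-1})e_i'$ for $i=1,\dots,n$, with the \emph{same} $T$. Conversely, if an invertible $T:H\rightarrow H'$ satisfies these operator identities for every $i$, then the fixed-$T$ statement, read in the other direction, yields $T(E_i)=E_i'$ for every $i$, so $T$ is an isomorphism of systems.

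The one point deserving care is precisely the passage from the ``there exists $T$'' phrasing of the Lemma to its ``for this particular $T$'' refinement; this is what makes the corollary not completely formal. Once it is isolated, the extension from one subspace to $n$ subspaces costs nothing, since a single operator $T$ is required to transport all of $E_1,\dots,E_n$ onto $E_1',\dots,E_n'$ simultaneously, and the operator identities are then simply imposed for each index against that same $T$.
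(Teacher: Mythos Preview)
Your argument is correct and follows the paper's approach: the paper presents the corollary as an immediate consequence of the preceding Lemma and gives no separate proof. Your observation that the Lemma's proof actually yields the fixed-$T$ equivalence (rather than merely the existential one) is exactly the point needed to make the passage to $n$ subspaces with a single $T$ go through, and you have supplied the short verification that the paper leaves implicit.
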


\noindent
{\bf Remark.} If there exists an invertible operator
$T: H \rightarrow H^{\prime}$
such that $e_{i}^{\prime}=Te_iT^{-1}$ for 
$i = 1, \dots, n$, then 
two systems $\mathcal S$ and $\mathcal S^{\prime}$ are 
isomorphic.  But the converse is not true as in Example 1.

\section{indecomposable systems}
In this section we  shall introduce a notion of indecomposable system,  
that is, a system which cannot be decomposed into a direct 
sum of smaller systems anymore. 

\bigskip
\noindent
{\bf Definition} (direct sum). 
Let ${\mathcal S} = (H;E_1, \dots , E_n)$ and 
 $\mathcal S^{\prime}=
 (H^{\prime};E_{1}^{\prime},\cdots,E_{n}^{\prime})$ be 
systems of $n$ subspaces in Hilbert spaces 
 $H$ and $H^{\prime}$.  Then their direct sum 
$\mathcal {S} \oplus\mathcal {S}^{\prime}$ is defined by 
\[
\mathcal {S} \oplus\mathcal {S}^{\prime}
:= (H\oplus H^{\prime};
E_{1}\oplus E_{1}^{\prime},\dots,E_{n}\oplus E_{n}^{\prime}).
\]

\bigskip
\noindent
{\bf Definition.} (indecomposable system). 
A system $\mathcal S=(H;E_{1},\dots,E_{n})$
of $n$ subspaces is called {\it decomposable} 
if the system $\mathcal S$ is isomorphic to 
a direct sum of two non-zero systems.  
A system $\mathcal S=(H;E_{1},\cdots,E_{n})$ is said to be 
{\it indecomposable} if it is not decomposable. 

\bigskip
\noindent
{\bf Example 2.} Let $H = \mathbb C ^2$. Fix an angle $\theta$
with $0 < \theta < \pi /2$.  Put 
$E_1 = \{ \lambda(1,0) \ | \ \lambda \in \mathbb C\}$ and
$E_2 = \{ \lambda (cos\theta, sin\theta)\ | \ \lambda \in \mathbb C \}$.  
Then
$(H;E_1,E_2)$ is isomorphic to 
\[ 
({\mathbb C}^2 ; {\mathbb C}\oplus 0, 0 \oplus {\mathbb C}) 
\cong (\mathbb C; \mathbb C, 0) \oplus (\mathbb C; 0, \mathbb C).
\]
Hence $(H;E_1,E_2)$ is decomposable. 
 
\bigskip
\noindent
{\bf Remark.} Let $e_1$ and $e_2$ be the
projections onto $E_1$ and $E_2$ in the Example above.  
Then the $C^*$-algebra 
$C^*(\{e_1,e_2\})$ generated by $e_1$ and $e_2$ is exactly $B(H)
\cong M_2(\mathbb C)$. Therefore 
the irreducibility of 
$C^*(\{e_1,e_2\})$ does {\it not} imply the 
indecomposability of $(H;E_1,E_2)$. Thus seeking an 
indecomposable system of subspaces is much more difficult and 
fundamental task than showing irreducibility of 
the $C^*$-algebra generated by the corresponding projections for 
the subspaces.   
\bigskip

We can characterize decomposability of systems inside the 
ambient Hilbert space as in \cite{EW1}

 Let $H$ be a Hilbert space and 
${\mathcal S} = (H;E_1, \dots , E_n)$  a system of $n$ subspaces.  
Then the following conditions are equivalent:
\begin{enumerate}
\item ${\mathcal S}$ is decomposable.
\item there exist non-zero closed
subspaces $H_1$ and $H_2$ of $H$ such that $H_1 + H_2 = H$,
$H_1 \cap H_2 = 0$ and $E_i = E_i \cap H_1 + E_i \cap H_2$
for any $i= 1,\ldots , n$.
\end{enumerate}

We give a condition of decomposability  in terms of 
endomorphism algebras for the systems.

We put $Idem(\mathcal S) := \{T \in End(\mathcal S) ; T = T^2 \}$.

Let $\mathcal S = (H;E_1,\ldots ,E_n)$ be a system
of $n$ subspaces in a Hilbert space $H$. 
Then $\mathcal S$ is indecomposable if and only if 
$Idem(\mathcal S) = \{0,I\}$.

Let $\mathcal S = (H;E_1,\ldots ,E_n)$ be a system
of $n$ subspaces in a Hilbert space $H$. 
Let $e_i$ be the projection of $H$ onto $E_i$ for $i = 1,\ldots ,n$.  
If $\mathcal S = (H;E_1,\ldots ,E_n)$
is indecomposable, then the $C^*(\{e_1,\ldots, e_n\})$ generated by
$e_1, \ldots, e_n$ is irreducible.  But the converse is not true.

\bigskip
\noindent
{\bf Definition.} Let $\mathcal S = (H;E_1,\ldots ,E_n)$ 
be a system of $n$ subspaces in a Hilbert space $H$. 
Let $e_i$ be the projection of $H$ onto $E_i$ for $i = 1,\ldots ,n$.  
We say that $\mathcal S$ is a {\it commutative} system 
if the $C^*(\{e_1,\ldots, e_n\})$ generated by
$e_1, \ldots, e_n$ is commutative. Be careful that commutativity 
is {\it not} an isomorphic invariant as shown in Example 1. But 
it makes sense that a system is isomorphic to a commutative 
system.

Let $\mathcal S = (H;E_1,\ldots ,E_n)$ 
be a system of $n$ subspaces in a Hilbert space $H$.
Assume that $\mathcal S$ is a commutative system. 
Then $\mathcal S$ is indecomposable if and only if 
$\dim H = 1$. Moreover each subset $\Lambda \subset \{1,\dots,n\}$ 
corresponds to a commutative system satisfying $\dim E_i = 1$ for 
$i \in \Lambda$ and $\dim E_i = 0$ for $i \notin \Lambda$. 
 
\bigskip
\noindent{\bf Example 3}. Let $H = \mathbb C ^2$.
Put $E_1 = \mathbb C \oplus 0$,
$E_2 = 0 \oplus \mathbb C$ and $E_3 = \{(x,x) \ x \in \mathbb C \}$.
Then $\mathcal S = (H;E_1,E_2,E_3)$ is indecomposable.
The system $\mathcal S$ is the lowest dimensional one 
among non-commutative indecomposable systems. In fact, 
the system $\mathcal S$ forms a double triangle 
in the sense below.  We see that 
distributive law fails: 
$$
 (E_1 \vee E_2) \wedge E_3 \not= (E_1 \wedge E_2) \vee (E_1 \wedge E_3).
$$

\bigskip
\noindent
{\bf Definition.} We say that a system $\mathcal S = (H;E_1,E_2,E_3)$ 
of three subspaces in a Hilbert space $H$ {\it forms a double triangle} if 
the family  $\{H,E_1,E_2,E_3,0 \}$ is a double triangle lattice,  
(which is also called a diamond), that is,
$$
E_i \vee E_j = H, \ \ \ \text{ and } \ \ \ E_i \wedge E_j = 0,  
\ \ (i \not= j , i, j = 1,2,3). 
$$
and each $E_i \not= H$, $E_i \not= 0$ . 
We remark that the distributive law fails in any double triangle. 
$$
 (E_1 \vee E_2) \wedge E_3 \not= (E_1 \wedge E_2) \vee (E_1 \wedge E_3).
$$
 
\bigskip
\noindent{\bf Example 4.} Let $G = \mathbb Z/ 2\mathbb Z * \mathbb Z/ 2\mathbb Z * \mathbb Z/ 2\mathbb Z =  
\langle a_1, a_2, a_3 \rangle$
be the free product of the cyclic groups of order two with three generators 
$a_1, a_2$ and $a_3$. Let $\lambda$ be the left regular representation of $G$ 
on $H = \ell^2(G)$. Then the reduced group $C^*$-algebra $C^*_r(G)$ is generatedby $\lambda_{a_1}$, $\lambda_{a_2}$ and $\lambda_{a_3}$. Since these three generators are self-adjoint unitaries, $e_i := (\lambda_{a_i} + I)/2$, ($i = 1,2,3$)
are projections. Let $E_i = \Im e_i$ . Then a system 
$\mathcal S = (H;E_1,E_2,E_3)$ 
of three subspaces forms a double triangle. In fact, 
let $x = \sum_g x_g \delta_g \in E_1 \cap E_2$. Since $e_i x = x$ , 
we have $\lambda_{a_i}x = x$ for 
$i = 1,2$. Therefore $x_{a_ig} = x_g$ 
for any $g \in G$. Since $\sum_h |x_h|^2 < \infty$, $x_g = 0$ for any $g$. 
Therefore $x = 0$. Hence $E_1 \cap E_2 = O$. The other conditions are similarly checked.

\bigskip
\noindent
{\bf Definition.} We say that a system $\mathcal S = (H;E_1,E_2,E_3)$ 
of three subspaces in a Hilbert space $H$ {\it forms a pentagon }
(with $E_3 \supset E_2$ )
 if 
the family  $\{ H,E_1,E_2,E_3,0 \} $ is a pentagon lattice 
(with $E_3 \supset E_2$ ), that is,
$$
E_1\vee E_2 = H, \ \  E_1 \wedge E_3 = 0,  
\text{ and } E_3 \supset E_2 \text{ with  } E_3 \not= E_2, 
$$
and each $E_i \not= H$, $E_i \not= 0$. 
We also say that $\mathcal S = (H;E_1,E_2,E_3)$  is a 
pentagon system. 

\bigskip
\noindent{\bf Example 5}. Let $K$ be a Hilbert space and 
$A: K \rightarrow K$ a bounded operator such that $A$ is one to one and 
$\Im A$ is dense in $K$ and not equal to $K$. Put  
$H = K \oplus K$, $E_1 = K \oplus 0$ and $E_2 = \{(x, Ax) | x \in K\}$. 
Let $M \not= 0$ be a finite-dimensional subspace of $K$ such that 
$M \cap \Im A = 0$. Put $E_3 = E_2 + (O,M)$. Then  
$\mathcal S = (H;E_1,E_2,E_3)$  is a 
pentagon system.

Recall that Halmos initiated the study of transitive lattices.  
A complete lattice of closed subspaces of a Hilbert space $H$
containing $0$ and $H$ is called 
{\it transitive} if every bounded operator on $H$ leaving each subspace 
invariant is a scalar multiple of the identity. Halmos gave an example of 
transitive lattice consisting of seven subspaces in \cite{Ha2}. 
Harrison-Radjavi-Rosenthal \cite{HRR} constructed a transitive lattice  
consisting of six subspaces using the graph of an unbounded operator.   
Any finite transitive lattice which consists of  $n$ subspaces  
gives an indecomposable system of $n$-$2$ subspaces  but the converse is 
not true. Following the study of transitive lattices, we shall introduce 
the notion of  transitive system.

\bigskip
\noindent{\bf Definition.} Let $\mathcal S = (H;E_1,\ldots ,E_n)$ 
be a system of $n$ subspaces in a Hilbert space $H$. Then we say that 
$\mathcal S$ is {\it transitive} if 
$End(\mathcal S) = {\mathbb C}I_H$.  Recall that  $\mathcal S$ is 
indecomposable if and only if $Idem(\mathcal S) = \{0,I\}$. Hence 
if $\mathcal S$ is transitive, then $\mathcal S$ is indecomposable.
But the converse is not true. In fact, 
the system
\[
{\mathcal S}_S = (H;K\oplus 0,0\oplus K,\graph A, \{(x,x) ; x \in K\}).
\]
of four subspaces associated with a unilateral shift $S$ 
as above is indecomposable but is not transitive, because 
$End(\mathcal S)$ contains $S \oplus S$.

\bigskip
\noindent{\bf Example 6}.(Harrison-Radjavi-Rosenthal \cite{HRR})
Let $K = \ell^2(\mathbb Z)$ and $H = K \oplus K$.
Consider a sequence $(\alpha _n)_n$ given by $\alpha _n = 1$ for
$n \le 0$ and $\alpha _n = exp((-1)^nn!)$ for $n \geq 1$.
Consider a bilateral weighted shift $S : \mathcal D_T \rightarrow K$ such
that $T(x_n)_n = (\alpha _{n-1}x_{n-1})_n$ with the domain
$\mathcal D_T = \{(x_n)_n \in  \ell^2(\mathbb Z) ; 
\sum_n |\alpha _nx_n|^2 < \infty \}$.
Let $E_1 = K \oplus 0$,
$E_2 = 0 \oplus K$, $
E_3 = \{(x,Tx) \in H ; x \in \mathcal D_T \}$ and
$E_4 = \{(x,x) \in H ; x \in K \}$.
Harrison, Radjavi and Rosenthal showed that 
$\{0, H, E_1,E_2,E_3,E_4\}$ is a transitive lattice. 
Hence the  system $\mathcal S = (H;E_1,E_2,E_3,E_4)$
of four subspaces in $H$ is transitive and in particular 
indecomposable.

It is easy to see the case of indecomposable systems 
of one subspace even in an infinite-dimensional Hilbert space.

Let $H$ be a Hilbert space and 
$\mathcal {S}=(H;E)$ a system of one subspace. 
Then $\mathcal{S}=(H;E)$ is indecomposable 
if and only if 
$\mathcal {S} \cong (\mathbb C;0)$ or 
$\mathcal {S} \cong (\mathbb C;\mathbb C)$.

Let $\mathcal {S}=(H;E)$ and $\mathcal {S}'=(H';E')$ be 
two systems of one subspace.  Then $\mathcal {S}$ and 
$\mathcal {S}'$ are isomorphic if and only if 
$\dim E = \dim E'$ and $\codim E = \codim E'$.

It is a well known fact that the relative position of two subspaces
$E_1$ and $E_2$ in a Hilbert space $H$  can be described 
completely up to unitary equivalence. 
 The Hilbert space $H$ is the direct sum of five
subspaces:
$$
H = (E_1\cap E_2) \oplus (\text{the rest}) \oplus (E_1\cap E_2^{\perp})
\oplus (E_1^{\perp} \cap E_2) \oplus (E_1^{\perp} \cap E_2^{\perp}).
$$
In the rest part,  $E_1$ and $E_2$ are in generic position and the
relative position is described only by \lq\lq the angles'' between them.
In fact the rest part is written as $K \oplus K$ for some subspace
$K$ and there exist two positive operators $c,s \in B(K)$ 
with null kernels with $c^2 + s^2 = 1$ such that 
\[
E_{1}=(E_{1}\cap E_{2})\oplus 
\Im \left( 
\begin{array}
{@{\,}cccc@{\,}} 1&0\\ 0&0
\end{array}
\right) \oplus 
(E_{1}\cap E_{2}^{\perp})\oplus 0 \oplus 0,
\]
and
\[
E_{2}=(E_{1}\cap E_{2}) \oplus 
\Im\left(
\begin{array}
{@{\,}cccc@{\,}}c^{2}&cs\\ cs&s^{2}
\end{array}
\right)
\oplus 0 \oplus (E_{1}^{\perp}\cap E_{2}) \oplus 0.
\]
By the functional calculus, there exists a unique positive operator
$\theta$, called the angle operator, such that
$c = \cos \theta \ \  \mbox{and} \ \ s = \sin \theta$ 
with $0 \leq \theta \leq \frac{\pi}{2}$. 
 We see  that 
the algebraic sum $E_1 + E_2$ is closed 
if and only if $scK + s^2K = K$ if and only if  $s$ is invertible.  
And $E_1 + E_2^{\perp}$ is closed if and only if 
$E_1^{\perp} + E_2$ is closed if and only if $c$ is invertible.  
We need the following fact: 

\begin{lemma}
Let 
$E_1$ and $E_2$ be two subspaces in a Hilbert space $H$. 
Let $P_i$ be the projection of $H$ onto  $E_i$. If 
$E_1 + E_2$ is closed, then 
$T:= (P_1 + P_2)|_{(E_1 + E_2)}: E_1 + E_2 \rightarrow E_1 + E_2$ is an 
onto invertible operator. 
\label{lemma:invertible}
\end{lemma}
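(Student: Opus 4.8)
The plan is to realise $R := P_1 + P_2$, restricted to the closed subspace $M := E_1 + E_2$, as a positive, self-adjoint, injective operator on the Hilbert space $M$ that is bounded below; such an operator is automatically invertible, in particular onto.

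First I would dispose of the soft part, which does not use the hypothesis. The operator $R = P_1+P_2 \in B(H)$ is self-adjoint with $0 \le R \le 2I$, and $RH \subseteq \Im P_1 + \Im P_2 = E_1 + E_2 = M$, so $M$ is $R$-invariant. For $y \in M^{\perp} = E_1^{\perp}\cap E_2^{\perp}$ we have $P_1 y = P_2 y = 0$, hence $M^{\perp}\subseteq \Ker R$; conversely $Rx = 0$ forces $\|P_1 x\|^2 + \|P_2 x\|^2 = \langle Rx,x\rangle = 0$, so $x \in E_1^{\perp}\cap E_2^{\perp} = M^{\perp}$. Thus $\Ker R = M^{\perp}$, and with respect to $H = M \oplus M^{\perp}$ one has $R = T \oplus 0$, where $T := R|_M$ is a positive, self-adjoint, injective operator on $M$. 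Since a positive self-adjoint operator is invertible exactly when it is bounded below, and since an injective self-adjoint operator on $M$ with closed range is onto (its range $\overline{\Im T} = (\Ker T)^{\perp} = M$), the lemma reduces to producing $\delta > 0$ with $\langle T\xi,\xi\rangle \ge \delta\|\xi\|^2$ for all $\xi \in M$.

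For that estimate I would use the canonical decomposition of the pair $(E_1,E_2)$ recalled just above: $H$ splits as $(E_1\cap E_2)$, the generic part $K \oplus K$ carrying $c,s$ with $c^2+s^2 = 1$ and null kernels, then $(E_1\cap E_2^{\perp})$, $(E_1^{\perp}\cap E_2)$ and $(E_1^{\perp}\cap E_2^{\perp}) = M^{\perp}$; accordingly $M$ is the sum of the first four pieces. On $E_1\cap E_2$ one has $R = 2I$, and on $E_1\cap E_2^{\perp}$ and $E_1^{\perp}\cap E_2$ one has $R = I$, so $T \ge I$ there. On the generic part $R$ is the block operator $\begin{pmatrix} I + c^2 & cs \\ cs & s^2 \end{pmatrix}$ on $K\oplus K$, and a short computation (using that $c$ and $s$ commute and are self-adjoint) gives $\langle R(u,v),(u,v)\rangle = \|u\|^2 + \|cu+sv\|^2$. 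Here the hypothesis enters: as recalled above, $E_1+E_2$ closed is equivalent to $s$ being invertible, and then $\|sv\| \le \|cu+sv\| + \|cu\| \le \|cu+sv\| + \|u\|$ together with $\|v\| \le \|s^{-1}\|\,\|sv\|$ yields $\|u\|^2 + \|v\|^2 \le (1 + 2\|s^{-1}\|^2)\,\langle R(u,v),(u,v)\rangle$. Hence $\delta := (1 + 2\|s^{-1}\|^2)^{-1}$ works uniformly on all four summands of $M$, so $T$ is bounded below, therefore invertible, and maps $M$ onto $M$.

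The one non-formal step is this last, generic-part estimate — converting invertibility of $s$ into coercivity of the block operator; the kernel identification, the self-adjointness bookkeeping, and the deduction ``bounded below $\Rightarrow$ invertible'' are all routine Hilbert-space functional analysis. An alternative to the coercivity computation would be to exhibit the inverse of the generic block directly as $\begin{pmatrix} I & -cs^{-1} \\ -cs^{-1} & (I+c^2)s^{-2} \end{pmatrix}$, which is bounded precisely because $s^{-1}$ is; I would keep the coercivity argument, though, since it makes the role of the closedness hypothesis most transparent.
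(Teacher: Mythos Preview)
Your proof is correct and follows essentially the same route as the paper: both use the canonical two-subspace decomposition, identify $T$ on the generic part as the block operator $\begin{pmatrix} I+c^2 & cs \\ cs & s^2 \end{pmatrix}$, and deduce its invertibility from the invertibility of $s$. The only difference is in that last step --- the paper invokes the commuting ``operator determinant'' $(I+c^2)s^2 - (cs)^2 = s^2$, which is exactly your alternative of writing down the explicit inverse, whereas your main argument reaches the same conclusion via the coercivity estimate $\langle R(u,v),(u,v)\rangle = \|u\|^2 + \|cu+sv\|^2$; your added soft preliminaries (identifying $\Ker R = M^\perp$ and $R = T\oplus 0$) are implicit in the paper's terse block decomposition.
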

\begin{proof}
Since $E_1 + E_2$ is closed, $s$ is invertible.  
Then it is easy to see that 
\[T= I \oplus 
\left(
\begin{array}
{@{\,}cccc@{\,}}I +c^{2} &cs\\ cs&s^{2}
\end{array}
\right) 
\oplus I \oplus I
\]
is invertible, because  the non-trivial component has the 
operator determinant
 $(I + c^2)s^2 -cscs = s^2$.
\end{proof}

 Let $\mathcal S = (H;E_1,E_2)$ be a system of two subspaces in 
a Hilbert space $H$.
Then $\mathcal S$ is indecomposable if and only if $\mathcal S$ is 
isomorphic to one of the
following four commutative systems:
$$
\mathcal S_1 = (\mathbb C; \mathbb C, 0), \ \  \mathcal S_2 = (\mathbb C; 0, \mathbb C), \\
\mathcal S_3 = (\mathbb C; \mathbb C, \mathbb C), \ \  \mathcal S_4 = (\mathbb C; 0, 0).
$$

\section{Brenner type decomposition }
We introduce a Brenner type decomposition which is a generalization 
of a Brenner decomposition of a system of three subspaces in a 
finite dimensional Hilbert space. 

\bigskip
\noindent
{\bf Definition.} Let  $\mathcal S = (H;E_1,E_2,E_3)$ be a 
system of three subspaces in a Hilbert space $H$. Then 
$\mathcal S$ is said to have a {\it Brenner type decomposition} if 
$\mathcal S$  is isomorphic to a system 
${\mathcal T} = (H;F_1,F_2,F_3 )$ satisfying  that there exist 
subspaces $S, N_1,N_2, N_3, M_1, M_2, M_3, Q, L$ of $H$ such  
that $(Q; Q_1,Q_2, Q_3)$ forms a  double triangle and 
\begin{align*}
  H & = S \oplus N_1 \oplus N_2 \oplus N_3 \oplus 
      M_1 \oplus M_2 \oplus M_3 \oplus Q \oplus L \\
F_1 & = S \oplus 0_{ \ \ } \oplus N_2 \oplus N_3 \oplus 
      M_1 \oplus 0_{ \ \ } \oplus 0_{ \ \ } \oplus Q_1 \oplus 0_{ \ \ }\\  
F_2 & = S \oplus N_1 \oplus 0_{ \ \ } \oplus N_3 \oplus 
      0_{ \ \ } \oplus M_2 \oplus 0_{ \ \ } \oplus Q_2 \oplus 0_{ \ \ } \\  
F_3 & =  S \oplus N_1 \oplus N_2 \oplus 0_{ \ \ } \oplus 
     0_{ \ \ } \oplus 0_{ \ \ } \oplus M_3 \oplus Q_3 \oplus 0_{ \ \ } \\  
\end{align*} 

\begin{prop}
 Let  $\mathcal S = (H;E_1,E_2,E_3)$ be a 
system of three subspaces in a Hilbert space $H$. 
Assume that $\mathcal (H;E_1,E_2,E_3)$ forms a double triangle.
Then the followings are equivalent: 
\begin{enumerate}
\item Linear sums $E_i + E_j$ are closed for any 
$i,j \in  \{1,2,3\}$ with 
$i \not= j$. \\
\item $\mathcal S$ is isomorphic to  a typical form, i.e. 
$$
(H; H_1,H_2, H_3) \cong 
(K \oplus K ; K \oplus 0, 0 \oplus K, \{(x,x) \ | x \in K\})
$$
for some Hilbert space $K$. 
\end{enumerate}
\label{prop:typical}
\end{prop}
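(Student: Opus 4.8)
The plan is to dispose of the implication $(2) \Rightarrow (1)$ at once and then concentrate on $(1) \Rightarrow (2)$. For $(2) \Rightarrow (1)$, in the typical form $(K \oplus K; K \oplus 0, 0 \oplus K, \{(x,x)\, |\, x \in K\})$ each of the three pairwise algebraic sums of the subspaces is plainly all of $K \oplus K$, hence closed; and since an isomorphism $\varphi$ of systems is a bounded invertible operator, one has $\varphi(E_i + E_j) = \varphi(E_i) + \varphi(E_j)$ and $\varphi$ preserves closedness, so $(1)$ holds for $\mathcal S$.

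For $(1) \Rightarrow (2)$ the first step is to put the pair $(E_1, E_2)$ in a standard position. Since $\mathcal S$ forms a double triangle we have $E_1 \wedge E_2 = 0$ and $E_1 \vee E_2 = H$; together with the hypothesis that $E_1 + E_2$ is closed this gives $E_1 + E_2 = H$ and $E_1 \cap E_2 = 0$, i.e.\ $E_1$ and $E_2$ are algebraically complementary. By Lemma~\ref{lemma:decompose} there is a closed subspace $M$ of $H$ and an isomorphism carrying $(H; E_1, E_2)$ onto $(H; M, M^{\perp})$. Applying this isomorphism to $E_3$ as well, and using that both ``forms a double triangle'' and ``all pairwise linear sums are closed'' are preserved under isomorphism, I may assume from now on that $E_1 = M$ and $E_2 = M^{\perp}$ with $0 \neq M \neq H$, and that $E_3$ is a closed subspace in double-triangle relation with $M$ and $M^{\perp}$ and with all pairwise sums closed.

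The second step is to realize $E_3$ as the graph of a bounded invertible operator $A : M \to M^{\perp}$. Let $P \in B(H)$ be the orthogonal projection onto $M$. From $E_3 \cap M^{\perp} = 0$ and $E_3 + M^{\perp} = H$ (note $E_3 + M^{\perp} = E_3 + E_2$ is closed and $E_3 \vee E_2 = H$) one checks that $P|_{E_3} : E_3 \to M$ is a bijection; since $E_3$ and $M$ are closed subspaces of $H$, the bounded inverse theorem makes $(P|_{E_3})^{-1}$ bounded, so $A := (I-P)(P|_{E_3})^{-1} : M \to M^{\perp}$ is bounded and $E_3 = \{ m + Am \, |\, m \in M \}$. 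Injectivity of $A$ comes from $E_3 \cap M = 0$, and surjectivity of $A$ onto $M^{\perp}$ from $E_3 + M = H$ (this is where closedness of $E_1 + E_3$ enters); thus $A$ is a bounded bijection and $A^{-1}$ is bounded as well.

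Finally, define $\varphi : H = M \oplus M^{\perp} \to M \oplus M$ by $\varphi(m + n) = (m, A^{-1}n)$ for $m \in M$ and $n \in M^{\perp}$; its inverse is $(m, m') \mapsto m + A m'$, so $\varphi$ is an isomorphism of Hilbert spaces. A short computation gives $\varphi(E_1) = M \oplus 0$, $\varphi(E_2) = 0 \oplus M$ (using that $A^{-1}$ maps $M^{\perp}$ onto $M$), and $\varphi(E_3) = \{(m, A^{-1}Am) \, |\, m \in M\} = \{(m,m) \, |\, m \in M\}$, which identifies $\mathcal S$ with the typical form for $K = M$. The only delicate points are the appeals to the bounded inverse theorem --- precisely where closedness of the subspaces and of their sums is used --- and the bookkeeping of which of the three closed-sum hypotheses yields injectivity and which yields surjectivity of $A$; the rest is routine.
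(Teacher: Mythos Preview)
Your proof is correct, but it takes a genuinely different route from the paper's. After the common first step of reducing to $E_2 = E_1^{\perp}$ via Lemma~\ref{lemma:decompose}, the paper applies the full two-subspace (Halmos--Dixmier) decomposition to the pair $(E_1,E_3)$, writing $H = K\oplus K$ with $E_3 = \{(cz,sz)\mid z\in K\}$ for positive operators $c,s$ with $c^2+s^2=I$; it then uses the double-triangle conditions to kill the corner pieces and the closedness of $E_1+E_3$ and $E_2+E_3$ to make $s$ and $c$ invertible, finishing with the diagonal map $T=\mathrm{diag}(c^{-1},s^{-1})$. You instead bypass the angle-operator machinery entirely: you realise $E_3$ directly as the graph of a bounded operator $A:M\to M^{\perp}$ via the bounded inverse theorem, read off injectivity and surjectivity of $A$ from $E_3\cap M=0$ and $E_3+M=H$, and conjugate by $\mathrm{diag}(I,A^{-1})$. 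Your argument is more elementary and self-contained, needing only the open mapping theorem rather than the structure theory of two subspaces; the paper's argument, on the other hand, meshes naturally with the $c,s$ formalism already recalled in the text and makes transparent exactly which closedness hypothesis forces which of $c,s$ to be invertible.
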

\begin{proof}It is trivial that (2) implies (1). Conversely, 
assume (1). Since $H = E_1 + E_2$ and $E_1 \cap E_2 = 0$, we 
may and do assume that $E_2 = E_1^{\perp}$ up to isomorphism. 
Apply two subspace theorem for $E_1$ and $E_3$. Since 
$E_1 \cap E_3 = 0$ and $E_1^{\perp} \cap E_3^{\perp} = 0$, 
The Hilbert space $H$ is the direct sum of three
subspaces:
$$
H =  (K \oplus K) \oplus (E_1\cap E_3^{\perp})
\oplus (E_1^{\perp} \cap E_3) .
$$
for some subspace $K$ and there exist 
two positive operators $c,s \in B(K)$ 
with null kernels with $c^2 + s^2 = 1$ such that 
\[
E_{1}= 
\Im \left( 
\begin{array}
{@{\,}cccc@{\,}} 1&0\\ 0&0
\end{array}
\right) \oplus 
(E_{1}\cap E_{3}^{\perp})\oplus 0 ,
\]
and
\[
E_{3}= 
\Im\left(
\begin{array}
{@{\,}cccc@{\,}}c^{2}&cs\\ cs&s^{2}
\end{array}
\right)
\oplus 0 \oplus (E_{1}^{\perp}\cap E_{3}) .
\]
Since $E_2 \cap E_3 = 0$, we have that $E_{1}^{\perp}\cap E_{3} = 0$. 
Since $E_2 \vee E_3 = H$, we have that  $E_{1}\cap E_{3}^{\perp} = 0$. 
Moreover 
\[
\Im\left(
\begin{array}
{@{\,}cccc@{\,}}c^{2}&cs\\ cs&s^{2}
\end{array}
\right) 
= \{\binom{cz}{sz} \ | \ z \in K\},
\] 
because $c+s$ is invertible.

Since $E_1 + E_3$ is closed,  
$s$ is invertible.  
Since $E_1^{\perp} + E_3 = E_2 + E_3$ is closed, $c$ is invertible.
Consider an invertible operator 
\[
T = \left(
\begin{array}
{@{\,}cccc@{\,}}c^{-1}&0\\ 0&s^{-1}
\end{array}
\right)
\]
Then $TE_1 = K \oplus 0$, $TE_2 = 0 \oplus K$ and 
$TE_3 = \{\binom{z}{z} \ | \ z \in K\}$. 
This completes the proof. 
\end{proof}

We need the following Theorem after \cite[Corollary \ 4.1]{Fe}  
by Feshchenko who studies 
closedness of the sum of $n$ subspaces of a Hilbert space. 
Let $H_1, \dots, H_n$ be subspaces of a Hilbert space.  
Then  $H_1, \dots, H_n$ are said to be linearly independent if 
for any $x_i \in H_i$ ($i = 1,\dots, n$), if 
$x_1 + \dots x_n = 0$, then $x_1 = \dots =x_n = 0$. They are 
linearly independent if and only if 
the representation $x = x_1 + \dots +x_n$ for 
$x_i \in H_i$ ($i = 1,\dots, n$) is unique if and only if 
$$
H _i \cap (\sum_{\{ j ; j \not= i \}} H_j )= 0
$$
for any $i = 1,\dots, n$.

\begin{thm}[\cite{Fe}] Let $H_1, \dots, H_n$ be linear independent 
subspaces of a Hilbert space 
$H$. If $H = H_1 + \dots + H_n$, then for any collection of subscripts 
$i(1), \dots, i(k)$, the sum $H_{i(1)} + \dots + H_{i(k)}$ is closed. 
 \label{thm:closedness}
\end{thm}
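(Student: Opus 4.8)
The plan is to realize each algebraic sum as the image of a single bounded operator and then to apply the open mapping theorem. First I would form the orthogonal Hilbert space direct sum $\mathcal{H} := H_1 \oplus \dots \oplus H_n$ and introduce the addition map
\[
\Phi : \mathcal{H} \to H, \qquad \Phi(x_1,\dots,x_n) := x_1 + \dots + x_n .
\]
This $\Phi$ is linear and bounded, since $\|\Phi(x_1,\dots,x_n)\| \le \|x_1\| + \dots + \|x_n\| \le \sqrt{n}\,\|(x_1,\dots,x_n)\|$ by the Cauchy--Schwarz inequality. The hypothesis that $H_1,\dots,H_n$ are linearly independent says exactly that the representation $x = x_1 + \dots + x_n$ is unique, i.e. that $\Phi$ is injective; and the hypothesis $H = H_1 + \dots + H_n$ says that $\Phi$ is surjective. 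Thus $\Phi$ is a bounded linear bijection between Hilbert spaces, so by the open mapping theorem $\Phi^{-1}$ is bounded, and in particular $\Phi$ is bounded below: there is $c > 0$ with $\|\Phi y\| \ge c\|y\|$ for all $y \in \mathcal{H}$.

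Next I would fix a collection of subscripts $i(1),\dots,i(k)$ and set $J := \{i(1),\dots,i(k)\}$, so that $\mathcal{H}_J := \bigoplus_{i \in J} H_i$ is a \emph{closed} subspace of $\mathcal{H}$, using that each $H_i$ is closed. By construction $\Phi(\mathcal{H}_J) = H_{i(1)} + \dots + H_{i(k)}$. Now the restriction of $\Phi$ to the Hilbert space $\mathcal{H}_J$ is still bounded below, hence has closed range; equivalently, $\Phi$ is a homeomorphism of $\mathcal{H}$ onto $H$ and therefore carries the closed set $\mathcal{H}_J$ onto a closed set. In either formulation we conclude that $H_{i(1)} + \dots + H_{i(k)}$ is closed, which is the assertion.

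There is essentially no hard step here; the only points needing care are the boundedness and bijectivity of $\Phi$ and the correct invocation of the open mapping theorem (a continuous linear bijection of Hilbert spaces being automatically a topological isomorphism). I would add two remarks. First, the argument does not really use that the total sum is all of $H$: it suffices that $H_1 + \dots + H_n$ be closed, and then the same proof, with $H$ replaced by that closed subspace, shows that every partial sum is closed. Second, applying the argument to $J$ and its complement simultaneously produces bounded idempotents on $H$ whose ranges are $H_{i(1)} + \dots + H_{i(k)}$ and the complementary partial sum, exhibiting $H$ as their algebraic direct sum.
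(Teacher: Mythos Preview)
Your argument is correct. The paper itself does not prove this theorem: it is quoted from Feshchenko \cite{Fe} (as Corollary~4.1 there) and used as a black box, so there is no proof in the paper to compare with. Your open-mapping-theorem approach---building the external orthogonal sum $\mathcal{H}=H_1\oplus\cdots\oplus H_n$, observing that the addition map $\Phi$ is a bounded bijection onto $H$, and then pushing the closed subspace $\mathcal{H}_J$ forward by the homeomorphism $\Phi$---is the standard, clean way to see the result, and your two closing remarks (that only closedness of the full sum is needed, and that one obtains complementary bounded idempotents) are correct and in fact anticipate exactly how the paper uses the theorem in the proof of Theorem~\ref{thm:sum}.
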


Using the Feshchenko's Theorem above, we can extend 
Lemma \ref{lemma:decompose} to $n$-subspaces. 

\begin{thm}
Let $H_1, \dots, H_n$ be $n$-subspaces of a Hilbert space $H$. 
Then the following are equivalent:
\begin{enumerate}
\item $H = H_1 + \dots + H_n$ and 
$H_1, \dots, H_n$ are linearly independent.  
\item $H$ is isomorphic to an outer orthogonal sum  
$H_1 \oplus H_2 \dots \oplus H_n$, i.e., there exists
an invertible operator $T: H \rightarrow 
H_1 \oplus H_2 \dots \oplus H_n$ such that $T(H_i) = 0 \oplus H_i \oplus 0$. 
\end{enumerate}
\label{thm:sum}
\end{thm}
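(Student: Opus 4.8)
The plan is to treat the two implications separately, with essentially all the real content going into $(1)\Rightarrow(2)$. The implication $(2)\Rightarrow(1)$ I would dispatch first and directly: in the outer orthogonal sum $K := H_1\oplus\cdots\oplus H_n$ the summands $0\oplus H_i\oplus 0$ are manifestly linearly independent and their linear span is all of $K$; pushing this forward by the invertible operator $T^{-1}\colon K\to H$ and using $H_i = T^{-1}(0\oplus H_i\oplus 0)$ transports both properties to $H_1,\dots,H_n$, which is exactly $(1)$. This step uses only that an invertible bounded operator preserves sums and intersections of subspaces.

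For $(1)\Rightarrow(2)$ the idea is to exhibit the isomorphism as the inverse of the obvious addition map. Define $S\colon K = H_1\oplus\cdots\oplus H_n\to H$ by $S(x_1,\dots,x_n) = x_1+\cdots+x_n$; it is bounded, with $\|S\|\le\sqrt n$. The hypothesis $H = H_1+\cdots+H_n$ says $S$ is onto, and linear independence of the $H_i$ says $S$ is one-to-one. Thus $S$ is a bounded linear bijection of Hilbert spaces, so the bounded inverse theorem (open mapping theorem) gives that $T := S^{-1}$ is bounded as well; hence $S$ is an isomorphism of systems. Since $S(0\oplus H_i\oplus 0) = H_i$ and $S$ is injective, $T(H_i) = 0\oplus H_i\oplus 0$, which is $(2)$.

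The place where Feshchenko's Theorem~\ref{thm:closedness} enters is in upgrading this to the \emph{internal} picture: one typically also wants that $T$ carries each partial sub-direct-sum $\bigoplus_j H_{i(j)}$ onto $H_{i(1)}+\cdots+H_{i(k)}$ realized as a closed subspace of $H$. Theorem~\ref{thm:closedness} guarantees exactly that closedness, and combined with the injectivity of $S$ it shows $S$ restricts to an isomorphism $\bigoplus_j H_{i(j)}\to H_{i(1)}+\cdots+H_{i(k)}$. Alternatively one can run $(1)\Rightarrow(2)$ by induction on $n$ without the global open mapping argument: Feshchenko's theorem makes $H_2+\cdots+H_n$ closed, so Lemma~\ref{lemma:decompose} applied to the pair $H_1$ and $H_2+\cdots+H_n$ supplies an idempotent splitting $H = H_1\oplus_{alg}(H_2+\cdots+H_n)$, after which the inductive hypothesis applied to $H_2,\dots,H_n$ inside $H_2+\cdots+H_n$ finishes the job by composition. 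I expect the only real obstacle to be bookkeeping — verifying that the composed or restricted isomorphisms send each $H_i$ precisely to its slot — since the analytic heart of the matter is entirely contained in the bounded inverse theorem together with Feshchenko's closedness result.
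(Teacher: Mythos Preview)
Your proposal is correct, and in fact your primary argument for $(1)\Rightarrow(2)$ is more direct than the paper's. The paper proceeds inductively: it invokes Feshchenko's theorem to see that $H_1+H_2$ is closed, applies Lemma~\ref{lemma:decompose} to split this as $H_1\oplus H_2$, then repeats with $(H_1\oplus H_2)+H_3$, and so on. Your main route---defining the addition map $S\colon H_1\oplus\cdots\oplus H_n\to H$, checking it is a bounded bijection, and invoking the open mapping theorem---bypasses Feshchenko's result entirely and handles all $n$ summands at once. Indeed, your argument actually \emph{recovers} the relevant case of Feshchenko's theorem as a corollary: once $T=S^{-1}$ is known to be bounded, each partial sum $H_{i(1)}+\cdots+H_{i(k)}$ is the image under the invertible $T^{-1}$ of the closed subspace $\bigoplus_j H_{i(j)}$, hence closed. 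The inductive alternative you sketch at the end is essentially the paper's proof verbatim, so you have both approaches covered; the open-mapping route is the cleaner one here.
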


\begin{proof}
Assume (1). By the Feshchenko's Theorem above, 
$H_1 + H_2$ is closed. Since $H_1$ and $H_2$ are linearly 
independent, $H_1 + H_2$ is isomorphic to an outer orhtogonal sum 
$H_1 \oplus H_2$ by Lemma \ref{lemma:decompose}. 
Since $(H_1 \oplus H_2) + H_3$ is closed by the Feshchenko's theorem 
and $(H_1 \oplus H_2)$ and $H_3$ are linearly independent,
 $(H_1 \oplus H_2) + H_3$ is isomorphic to an outer orhtogonal sum 
$H_1 \oplus H_2 \oplus H_3$ by Lemma \ref{lemma:decompose}. Inductively 
we can show (2). The converse is clear.  
\end{proof}

The failure of the ditributive law is measured by the inclusions:
\[
    ( (E_i \wedge E_j) \vee (E_i \wedge E_k)) \subset 
    (E_i \wedge (E_j \vee E_k))
\]

Therefore the finite dimensonality of its quotient space is 
a slight generalization of the finite dimensionality of the 
ambient space $H$. 

\begin{thm}
 Let  $\mathcal S = (H;E_1,E_2,E_3)$ be a 
system of three subspaces in a Hilbert space $H$.
Then the followings are equivalent: 
\begin{enumerate}
\item Linear sums $E_i + E_j$ and $(E_i \cap E_k) + (E_j \cap E_k)$ 
are closed for $i,j,k \in  \{1,2,3\}$ with $i \not= j \not= k \not= i$ 
and  the quotient space 
$ (E_3 \wedge (E_1 \vee E_2))/ ( (E_3 \wedge E_1) \vee (E_3 \wedge E_2))$ 
is finite-dimensional.\\
\item $\mathcal S$ has a Brenner type decomposition with a finite-dimensional 
double triangle part $Q$. 
\end{enumerate}

Moreover if these equivalent conditions are satisfied, then 
the double triangle part $Q$ is isomorphic to  a typical form, i.e. 
$$
(Q; Q_1,Q_2, Q_3) \cong 
(K \oplus K ; K \oplus 0, 0 \oplus K, \{(x,x) \ | x \in K\})
$$
for some Hilbert space $K$. 

\end{thm}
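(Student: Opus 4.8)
We sketch the intended proof. Throughout, write (1a), (1b), (1c) for the three parts of condition (1): closedness of the sums $E_i+E_j$, closedness of the sums $(E_i\cap E_k)+(E_j\cap E_k)$, and finite-dimensionality of the quotient appearing in (1). The implication $(2)\Rightarrow(1)$ is the routine direction: since an isomorphism of systems commutes with $\vee$ and $\wedge$, sends closed sums to closed sums and preserves the dimension of a quotient, we may replace $\mathcal S$ by $\mathcal T$ and read everything off the explicit decomposition. For distinct $i,j$ the sum $F_i+F_j$ is the orthogonal sum of the relevant $S$-, $N$- and $M$-blocks together with $Q_i+Q_j$, and $Q_i+Q_j=Q$ since $(Q;Q_1,Q_2,Q_3)$ is a double triangle; as $Q$ is finite-dimensional this sum is closed. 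Likewise $F_i\cap F_k$ is the orthogonal sum of the blocks lying in both $F_i$ and $F_k$, with $Q_i\cap Q_k=0$, so $(F_i\cap F_k)+(F_j\cap F_k)$ is again a sum of blocks, hence closed. Finally $F_3\wedge(F_1\vee F_2)=S\oplus N_1\oplus N_2\oplus Q_3$ and $(F_3\wedge F_1)\vee(F_3\wedge F_2)=S\oplus N_1\oplus N_2$, so the quotient in (1c) is isomorphic to $Q_3$, which is finite-dimensional. The same computation gives the ``moreover'' clause: given (2), $Q$ is a double triangle inside a finite-dimensional space, so hypothesis (1) of Proposition~\ref{prop:typical} holds automatically and its conclusion applies.

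For $(1)\Rightarrow(2)$ the plan begins by peeling off, one at a time, the eight ``Boolean'' direct summands. Put $S=E_1\cap E_2\cap E_3$, $N_1=E_1^\perp\cap E_2\cap E_3$, $N_2=E_1\cap E_2^\perp\cap E_3$, $N_3=E_1\cap E_2\cap E_3^\perp$, $M_1=E_1\cap E_2^\perp\cap E_3^\perp$ together with the two analogous $M_2,M_3$, and $L=(E_1\vee E_2\vee E_3)^\perp$. Each of these eight closed subspaces is, for every index $i$, either contained in $E_i$ or orthogonal to $E_i$; hence $H=S\oplus S^\perp$ forces $E_i=S\oplus(E_i\cap S^\perp)$ and exhibits $\mathcal S$ as a multiple of $\mathcal S_8$ plus a smaller system, and likewise peeling the $N_i$ splits off multiples of $\mathcal S_5,\mathcal S_6,\mathcal S_7$, peeling the $M_i$ splits off multiples of $\mathcal S_2,\mathcal S_3,\mathcal S_4$, and peeling $L$ splits off a multiple of $\mathcal S_1$. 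One checks that each of (1a), (1b), (1c) passes to the orthogonal complement at every step; for (1c) the key point is that whichever block is peeled, it drops simultaneously out of $E_3\wedge(E_1\vee E_2)$ and out of $(E_3\wedge E_1)\vee(E_3\wedge E_2)$, so that quotient is literally unchanged. After all eight peelings we are left with a ``reduced'' system $(Q;Q_1,Q_2,Q_3)$, still satisfying (1a)--(1c), in which all eight intersections $\bigcap_i(Q_i\text{ or }Q_i^\perp)$ vanish, and a short orthogonality computation shows that the peeled pieces reassemble into the distributive part $S\oplus N_1\oplus\dots\oplus L$ of a Brenner type decomposition, with residual subspaces $Q_i=E_i\cap Q$.

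It thus suffices to prove the core claim: a reduced system satisfying (1a)--(1c) is either $0$ or a finite-dimensional double triangle. Reduced-ness alone already forces each $Q_i\wedge Q_j$ and each $(Q_i\vee Q_j)^\perp=Q_i^\perp\cap Q_j^\perp$ to meet both $Q_k$ and $Q_k^\perp$ trivially, i.e.\ to be in generic position with $Q_k$. The plan is to prove first that $Q_i\wedge Q_j$ is finite-dimensional; it then follows that $(Q_i\wedge Q_j)+Q_k$ is closed, being a finite-dimensional subspace plus a closed one, so Lemma~\ref{lemma:decompose}, applied inside $(Q_i\wedge Q_j)+Q_k$ and extended by zero on its orthogonal complement, produces an idempotent in $\mathrm{End}(Q;Q_1,Q_2,Q_3)$ with range $Q_i\wedge Q_j$ and kernel containing $Q_k$; thus the reduced system would have a nonzero direct summand isomorphic to a multiple of $\mathcal S_5,\mathcal S_6$ or $\mathcal S_7$ as soon as $Q_i\wedge Q_j\neq0$, contradicting reduced-ness, and dually $Q_i\vee Q_j=Q$. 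The degenerate cases $Q_i\in\{0,Q\}$ are seen, again by reduced-ness, to collapse the system to $0$. Once all $Q_i\wedge Q_j=0$ and $Q_i\vee Q_j=Q$, the reduced system forms a double triangle with all sums $Q_i+Q_j=Q$ closed, so Proposition~\ref{prop:typical} puts it in the typical form $(K\oplus K;K\oplus0,0\oplus K,\{(x,x)\mid x\in K\})$; here $K\cong Q_3=Q_3\wedge(Q_1\vee Q_2)$ while $(Q_3\wedge Q_1)\vee(Q_3\wedge Q_2)=0$, so $K$ is isomorphic to the quotient in (1c) and is therefore finite-dimensional. Reassembling with the Boolean part yields the Brenner type decomposition with finite-dimensional double triangle part.

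The step I expect to be the main obstacle is the finite-dimensionality of $Q_i\wedge Q_j$ in the reduced system --- equivalently, the control of the ``generic-angle'' part of a pairwise intersection $Q_i\wedge Q_j$ relative to the third subspace $Q_k$ --- deduced purely from (1a), (1b) and (1c). In infinite dimensions this cannot be shortcut by a decomposition into indecomposables; instead one would apply the two-subspace structure theorem to the pair $(Q_i\wedge Q_j,\,Q_k)$ and use the closedness of the sums in (1a), (1b) together with Feshchenko's theorem (Theorems~\ref{thm:closedness} and \ref{thm:sum}) to force that generic part into the finite-dimensional quotient of (1c). Everything downstream of this is bookkeeping with Lemmas~\ref{lemma:decompose}, \ref{lemma:invertible} and Proposition~\ref{prop:typical}.
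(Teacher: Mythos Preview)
Your direction $(2)\Rightarrow(1)$ is fine. The gap is in the ``core claim'' for $(1)\Rightarrow(2)$: it is \emph{false} that a reduced system satisfying (1a)--(1c) must be $0$ or a double triangle. Take $Q=\mathbb C^2$, $Q_1=Q_2=\mathbb C e_1$, $Q_3=\mathbb C(e_1+e_2)$. Then $Q_1^\perp=Q_2^\perp=\mathbb C e_2$ and $Q_3^\perp=\mathbb C(e_1-e_2)$, and one checks that all eight triple intersections $\bigcap_i(Q_i\text{ or }Q_i^\perp)$ vanish; (1a)--(1c) hold trivially in finite dimensions. Yet $Q_1\cap Q_2=\mathbb C e_1\neq0$, so this reduced system is not a double triangle (it is isomorphic to $\mathcal S_5\oplus\mathcal S_4$). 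The step that fails is ``contradicting reduced-ness'': your idempotent $P$ does give a direct-sum decomposition with a $\mathcal S_5$-type summand, but that decomposition is \emph{not orthogonal}, and reduced-ness is a statement about orthogonal intersections, not an isomorphism invariant. Orthogonal peeling removes only the orthogonally visible Boolean content; further Boolean pieces can remain in generic position with the third subspace, and no amount of two-subspace angle analysis between $Q_i\wedge Q_j$ and $Q_k$ will force them out.

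The paper avoids this by building the double-triangle piece \emph{first}, not as a residue. One sets $Q_3=(E_3\wedge(E_1\vee E_2))\ominus((E_3\wedge E_1)\vee(E_3\wedge E_2))$, finite-dimensional by hypothesis; closedness of $E_1+E_2$ makes $T=(P_1+P_2)|_{E_1+E_2}$ invertible (Lemma~\ref{lemma:invertible}), and with $A_i=P_iT^{-1}$ one puts $Q_1=A_1Q_3\subset E_1$, $Q_2=A_2Q_3\subset E_2$. The crux is the identity
\[
E_1\cap(E_2+E_3)=\big((E_1\cap E_2)+(E_1\cap E_3)\big)\oplus_{\mathrm{alg}}Q_1,
\]
together with its cyclic analogues, which says precisely that the failure of distributivity is entirely absorbed into the finite-dimensional $Q_i$. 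Only \emph{after} this does one define $S,N_i,M_i,L$ (essentially as you do), verify by a direct element chase that the nine pieces $S,N_1,N_2,N_3,M_1,M_2,M_3,Q,L$ are linearly independent with algebraic sum $H$, and invoke Feshchenko's theorem (Theorem~\ref{thm:sum}) to upgrade the algebraic direct sum to an orthogonal one. The typical form of $Q$ then follows from Proposition~\ref{prop:typical}. Your peeling idea can be made to work for the later Theorem in the paper (where the hypothesis is closedness of $(E_i\vee E_j)+E_k$ and $(E_i\cap E_j)+E_k$ and there is no finite-dimensionality assumption), but for the present statement the direct construction of $Q$ is what drives the proof.
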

\begin{proof}It is trival that (2) implies (1). Conversely, 
assume (1). 
 Let 
\[
Q_3 = (E_3 \wedge (E_1 \vee E_2)) 
\cap ( (E_3 \wedge E_1) \vee (E_3 \wedge E_2))^{\perp}
\]
Then $Q_3$ is finite-dmensional by the assumption and 
\[
(E_3 \wedge (E_1 \vee E_2)) 
= ( (E_3 \wedge E_1) \vee (E_3 \wedge E_2)) \oplus Q_3
\]

Let $P_i$ be the projection of $H$ onto $E_i$.  Since 
$E_1 + E_2$ is closed, 
$T:= (P_1 + P_2)|_{(E_1 + E_2)}: E_1 + E_2 \rightarrow E_1 + E_2$ is an 
onto invertible operator by Lemma \ref{lemma:invertible}. 
Put $A_1 = P_1T^{-1}$ and $A_2 = P_2T^{-1}$. 
Then $A_1 + A_2 =id|_{E_1 + E_2}$. Put $Q_1 := A_1(Q_3)\subset E_1$ and 
$Q_2 := A_2(Q_3)\subset E_2$. Then $Q_1$ and $Q_2$ are finite-dimensional. 
For any $q_3 \in Q_3$, put $q_1 = A_1q_3 \in Q_1$ and 
$q_2 = A_2q_3 \in Q_2$. Then $q_1 + q_2 = q_3$.  Let $Q := Q_1 + Q_2$. 
Then 
\[
Q = Q_1 + Q_2 = Q_2 + Q_3 = Q_3 + Q_1. 
\]
Moreover $Q_2 \cap Q_3 = 0$. In fact, $Q_2 \cap Q_3 \subset E_2 \cap E_3$ 
and $Q_2 \cap Q_3 \subset Q_3 \subset (E_2 \cap E_3)^{\perp}$.  
Similarly we have  $Q_1 \cap Q_3 = 0$. Let $q \in Q_1 \cap Q_2$. 
Then there exists $q_3 \in Q_3$ such that $q = A_1q_3$ and 
\[
q_3 = A_1q_3 + A_2q_3 = q + A_2q_3 \in Q_2 + Q_2 = Q_2
\]
Thus $q_3 \in Q_3 \cap Q_2 = 0$. Hence $q = A_1q_3 = 0$.
This shows that $Q_1 \cap Q_2 = 0$. Therefore 
$(Q; Q_1,Q_2, Q_3)$ forms a  double triangle.

We shall show that 
\[
(E_1 \cap (E_2 + E_3)) 
= ( (E_1 \cap  E_2) + (E_1 \cap E_3)) \oplus_{alg} Q_1
\]
Since $Q_1 \subset E_1$ and 
$Q_1 \subset Q_2 + Q_3 \subset (E_2 + E_3)$, 
\[
(E_1 \cap (E_2 + E_3)) 
\supset  ( (E_1 \cap  E_2) + (E_1 \cap E_3)) + Q_1
\]
Conversely let $x_1 \in (E_1 \cap (E_2 + E_3))$. Then there 
exist $x_2 \in E_2$ and $x_3 \in E_3$ such that $x_1 = x_2 + x_3$. 
Since 
\[
x_3 = x_1 -x_2 \in E_3 \cap (E_1 + E_2) 
= ( (E_3 \cap E_1) + (E_3 \cap E_2)) + Q_3,
\]
there exist $y_1 \in E_3 \cap E_1$, $y_2 \in E_3 \cap E_2$ 
and $q_3 \in Q_3$ such that $x_3 = y_1 + y_2 + q_3$. 
Since $Q_3 \subset Q_1 + Q_2$, there exist $q_1 \in Q_1$ and 
$q_2 \in Q_2$ such that $q_3 = q_1 + q_2$.  Then we have that 
\[
x_1 -x_2 = x_3 =  y_1 + y_2 + q_3 = y_1 + y_2 +  q_1 + q_2.
\]
Put 
\[
 z_{12} := x_1 -y_1 - q_1 = y_2 + x_2 + q_2 \in  E_1 \cap E_2 . 
\]
Then 
\[
x_1 = q_1 + y_1 + z_{12} \in Q_1 + E_3 \cap E_1 + E_1 \cap E_2
\]
This implies that 
\[
(E_1 \cap (E_2 + E_3)) 
\subset  ( (E_1 \cap  E_2) + (E_1 \cap E_3)) + Q_1
\]
We shall show that 
$((E_1 \cap  E_2) + (E_1 \cap E_3)) \cap Q_1 = 0. $
Let $q_1 \in ((E_1 \cap  E_2) + (E_1 \cap E_3)) \cap Q_1$. 
Then there exist $y \in E_1 \cap  E_2$ and $z \in E_1 \cap E_3$ 
such that $q_1 = y + z$. Since $q_1 \in Q_1$, there exists 
$q_3 \in Q_3$ such that $ q_1 =A_1q_3$. Put $q_2 = A_2q_3$. 
Then $q_3 = q_1 + q_2$. Hence $y + z = q_1 = q_3 -q_2$. Put
\[
s := z - q_3 = -y - q_2  \in E_3 \cap E_2.
\]
Then $q_3 = z -s  \in (E_3 \cap E_1 ) + (E_3 \cap E_2)$. 
Hence $q_3 \in Q_3 \cap  ((E_3 \cap E_1 ) + (E_3 \cap E_2)) = 0$. 
Thus $ q_1 =A_1q_3 = 0$. 
Therefore we have that 
\[
(E_1 \cap (E_2 + E_3)) 
= ( (E_1 \cap  E_2) + (E_1 \cap E_3)) \oplus_{alg} Q_1
\]
Similarly we have that 
\[
(E_2 \cap (E_1 + E_3)) 
=  ( (E_2 \cap  E_1) + (E_2 \cap E_3)) \oplus_{alg} Q_2
\]
Put 
\[
M_1 := E_1 \cap (E_1 \cap (E_2 + E_3))^{\perp}
\]
\[
M_2 := E_2 \cap (E_2 \cap (E_3 + E_1))^{\perp}
\]
\[
M_3 := E_3 \cap (E_3 \cap (E_1 + E_2))^{\perp}
\]
Then we have that 
\[
E_1 = M_1 \oplus (E_1 \cap (E_2 + E_3)), \ \ 
E_2 = M_2 \oplus (E_2 \cap (E_3 + E_1))
\]
and 
\[
E_3 = M_3 \oplus (E_3 \cap (E_1 + E_2))
\]
Put $S := E_1 \cap E_2 \cap E_3$ and 
\[
N_1 := E_2 \cap E_3 \cap S^{\perp}, \ \ 
N_2 := E_3 \cap E_1 \cap S^{\perp} \ \ \text{ and }
N_3 := E_1 \cap E_2 \cap S^{\perp}. 
\]
Then we have that
\[
E_2 \cap E_3 = S \oplus N_1, \ \ 
E_3 \cap E_1 = S \oplus N_2  \ \ \text{ and }
E_1 \cap E_2 = S \oplus N_3. 
\]
Put $L := (E_1 + E_2 + E_3)^{\perp} \cap H$.  
Moreover 
\[
E_1 = M_1 + ( (E_1 \cap  E_2) + (E_1 \cap E_3)) +  Q_1 
    = S +  N_2 + N_3 + M_1 + Q_1
\]
Similarly we also have that 
\[
E_2 = S +  N_1 + N_3 + M_2 + Q_2, \ \ \text{ and }
E_3 = S  + N_1 + N_2 + M_3 + Q_3.
\]
Therefore 
\[
E_1 + E_2 + E_3 = S + N_1 + N_2 + N_3 +M_1 + M_2 + M_3 + Q
\]
and 
\[
H = (E_1 + E_2 + E_3) + L = S + N_1 + N_2 + N_3 +M_1 + M_2 + M_3 + Q + L.
\]
Finally we shall show that the linear sum of the right-hand side is 
in fact an algebraic direct sum. We need to show that 
$S, N_1, N_2 , N_3, M_1, M_2, M_3, Q$ and $L$ 
are linearly indepenent. Let 
\[
s + n_1 + n_2 + n_3 + m_1 + m_2 + m_3 + q_1 + q_2 + \ell = 0
\]
for $s \in S, n_1 \in N_1, n_2 \in N_2, n_3 \in N_3, 
m_1 \in M_1, m_2 \in M_2, m_3 \in M_3, q_1 \in Q_1, q_2 \in Q_2$ 
and $\ell \in L$. Then it is clear that $\ell = 0$. 
Therefore  
\[
-m_3 = (n_2 +m_1 +  q_1) + ( n_1 + m_2 + q_2) + (n_3 + s) 
\in E_1 + E_2 + E_1 \cap E_2 \subset E_1 + E_2. 
\]
Therefore $m_3  \in M_3 \cap (E_3 \cap (E_1 + E_2)) = 0$. 
Thus $m_3 = 0$. Since 
\[
q_1 + q_2 = q_2' + q_3' = q_3'' + q_1''
\]
for some $q _2 \in Q_2$, $q_3', q_3'' \in Q_3$ and $q_1'' \in Q_1$,. 
we similarly have that $m_1 = m_2 = 0$. 
Hence 
\[
s + n_1 + n_2 + n_3 + q_1 + q_2 = 0. 
\]
Put $w := n_1 + q_2 = -(n_2 + n_3 +q_1) \in E_2 \cap E_1$. 
Then 
\[
q_2 = w -n_1  \in (E_2 \cap E_1) + (E_2 \cap E_3)
\]
Since $q_2 \in Q_2 \cap ((E_2 \cap E_1) + (E_2 \cap E_3)) = 0$, 
we have that $q_2 = 0$. Similarly we have that $q_1 = 0$. 
Therefore $s + n_1 + n_2 + n_3 = 0$.  Since 
\[
E_1 \cap E_2 \ni s + n_3 = -n_1 -n_2 \in E_3, 
\]
$s + n_3 \in (E_1 \cap E_2) \cap E_3 = S$. Thus $n_3 \in S \cap N_3 = 0$. 
Therefore $n_3 = 0$. Similarly we have that $n_1 = n_2 = 0$. Hence 
$s = 0$. 
Finally Theorem \ref{thm:sum}
 implies the conclusion. The rest is clear by Proposition \ref{prop:typical}.

\end{proof}

As a Corollary, we get the original Brenner's theorem. 

\begin{cor}[\cite{B}, \cite{MS}]
Let  $\mathcal S = (H;E_1,E_2,E_3)$ be a 
system of three subspaces in a finite dimensional Hilbert space $H$. 
Then 
$\mathcal S$ has a Brenner type decomposition. 
\end{cor}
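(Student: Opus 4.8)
The plan is to deduce the corollary as an immediate specialization of the preceding theorem. First I would observe that when $H$ is finite-dimensional, every linear subspace of $H$ is automatically closed; in particular the linear sums $E_i + E_j$ and $(E_i \cap E_k) + (E_j \cap E_k)$ are closed for all choices of distinct $i,j,k \in \{1,2,3\}$. Next I would note that the quotient space
\[
(E_3 \wedge (E_1 \vee E_2))/((E_3 \wedge E_1) \vee (E_3 \wedge E_2))
\]
is a quotient of the finite-dimensional space $E_3 \wedge (E_1 \vee E_2)$ and hence is finite-dimensional. Thus condition (1) of the preceding theorem is satisfied with no further hypothesis.

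Applying that theorem then gives that $\mathcal S$ has a Brenner type decomposition, in fact one whose double triangle part $Q$ is finite-dimensional; a fortiori $\mathcal S$ has a Brenner type decomposition in the sense of the definition, which is exactly the assertion of the corollary. By the final clause of that theorem (equivalently, by Proposition \ref{prop:typical}), the double triangle part is isomorphic to the typical form $(K \oplus K ; K \oplus 0, 0 \oplus K, \{(x,x)\mid x \in K\})$ with $K$ finite-dimensional, and Theorem \ref{thm:sum} packages the remaining pieces as an algebraic direct sum; so one recovers precisely Brenner's decomposition into a distributive (Boolean) part $S \oplus N_1 \oplus N_2 \oplus N_3 \oplus M_1 \oplus M_2 \oplus M_3 \oplus L$ and the double-triangle part $Q$.

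There is essentially no obstacle: all the analytic hypotheses of the preceding theorem — closedness of the various algebraic sums and finite-dimensionality of a certain quotient — are automatic in finite dimensions, so the corollary is a pure specialization. The only point worth making explicit is that the substantive content has already been carried out in the preceding theorem together with Proposition \ref{prop:typical} and Theorem \ref{thm:sum}; passing to finite dimensions merely dispenses with the need to check closedness and finiteness by hand.
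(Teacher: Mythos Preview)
Your proposal is correct and is exactly the paper's intended argument: the corollary is stated immediately after the preceding theorem with no separate proof, and the only point is that in finite dimensions every linear sum is closed and the relevant quotient is trivially finite-dimensional, so condition (1) of that theorem is automatic. Your additional remarks about the typical form of the double-triangle part and the recovery of the Boolean summands are accurate elaborations but not needed for the bare statement.
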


\noindent
{\bf Remark.} Even if an ambient space $H$ is finite-dimensional, 
a double triangle part is not uniquely determined in a 
Brenner type decomposition. In fact, 
let $H = {\mathbb C} \oplus {\mathbb C} \oplus {\mathbb C}$, 
$E_1 = {\mathbb C} \oplus 0 \oplus {\mathbb C}$, 
$E_2 = 0 \oplus {\mathbb C}  \oplus {\mathbb C}$ and 
$E_3 = \{x(1,1,1) \in H \  | \ x \in {\mathbb C}\}$. 
Put $N_3 = 0 \oplus 0 \oplus {\mathbb C}$. 
Put $S =N_1 =N_2 = M_1= M_2= M_3 =L = 0$. 
Let $Q_3 = E_3$, $Q_1 = \{x(1,0,1/2) \in H \  | \ x \in {\mathbb C}\}$ 
and $Q_2 = \{x(0,1,1/2) \in H \  | \ x \in {\mathbb C}\}$
This gives a Brenner type decomposition.  
We have another Brenner type decompositin by 
$Q_3' = E_3$, $Q_1' = \{x(1,0,1/3) \in H \  | \ x \in {\mathbb C}\}$ 
and $Q_2' = \{x(0,1,2/3) \in H \  | \ x \in {\mathbb C}\}$ and 
the others are the same as the first one. 
Since $Q := Q_1 + Q_2 \not= Q' := Q_1' + Q_2'$, they provide 
two kinds of Brenner type decompositions.

Let $\mathcal S = (H;E_1,E_2,E_3)$ be a 
system of three subspaces which has a Brenner type decomposition. 
Then it is clear that 
 for any $i,j,k = 1,2,3$ with $i \not= j$, $j \not= k$ and $k \not= i$,
$$
\overline{E_i + E_j } + E_k \  \text{ and } \ (E_i \cap E_j) + E_k
$$ 
are closed in $H$. We shall show that this topological property 
characterize a 
system of three subspaces 
which have a Brenner type decomposition. 

\bigskip
\noindent
{\bf Example 7.}
Let $\mathcal S = (H;E_1,E_2,E_3)$ be a system 
of three subspaces in a Hilbert space $H$. 
If  $\mathcal S = (H;E_1,E_2,E_3)$ forms a pentagon 
(with $E_1 \supset E_2$ ), then 
neither $E_1 + E_3$ nor $E_2 + E_3$ are closed. Therefore 
$\overline{E_1 + E_2 } + E_3 = E_1 + E_2$ is not closed and 
$(E_1 \cap E_2) + E_3 = E_2 + E_3$ is not closed. Hence 
$\mathcal S = (H;E_1,E_2,E_3)$ does not have 
a Brenner type decomposition. 
Thus this closedness property excludes pentagons to have a 
Brenner type decomposition.

We shall split out a distributive part and a double triangle part 
step by step. 

\begin{lemma}
Let $\mathcal S = (H;E_1,E_2,E_3)$ be a system 
of three subspaces in a Hilbert space $H$. Suppose that 
$(E_1 \cap E_2) + E_3$ is closed. Then 
there exist systems $\mathcal S ' = (H';E_1',E_2',E_3')$ and 
$\mathcal S '' = (H'';E_1'',E_2'',E_3'')$ of three subspaces such that 
\begin{enumerate}
\item $\mathcal S = (H;E_1,E_2,E_3) 
\cong (H';E_1',E_2',E_3') \oplus (H'';E_1'',E_2'',E_3'')$.
\item $ E_1'' \cap E_2'' = 0$
\item $E_3' \subset E_1' = E_2'$ \ \ \text {(\lq\lq distributive component ")} 
\end{enumerate}
\label{lemma:cap-zero}
\end{lemma}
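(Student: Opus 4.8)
The plan is to reduce the whole statement to the construction of a single bounded idempotent on $H$ and then read off $\mathcal S'$, $\mathcal S''$ from it. Set $F := E_1 \cap E_2$ and $G := E_1 \cap E_2 \cap E_3 = F \cap E_3$. \emph{Claim:} it suffices to produce a bounded idempotent $P \in B(H)$ with $\Ker P = F$ and $(I-P)(E_3) \subseteq G$. Indeed, granting this, put $H' := \Ker P = F$ and $H'' := \Im P$; both are closed, $H' \cap H'' = 0$, and $H = H' + H''$ since $x = (I-P)x + Px$. Define $\varphi : H \to H' \oplus H''$ by $\varphi(x) = ((I-P)x,\,Px)$; it is bounded and invertible, with inverse $(a,b) \mapsto a+b$. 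For $x \in E_1$ (resp. $E_2$) one has $(I-P)x \in F \subseteq E_1$ (resp. $E_2$), hence $Px = x-(I-P)x \in E_1$ (resp. $E_2$); for $x \in E_3$ one has $(I-P)x \in G \subseteq E_3$, hence $Px \in E_3$. Together with $Pa=0$ for $a\in H'$ and $Pb=b$ for $b\in H''$, this shows $\varphi(E_i) = (E_i \cap H') \oplus (E_i \cap H'')$ for $i=1,2,3$, so $\varphi$ is an isomorphism of $\mathcal S$ onto $\mathcal S' \oplus \mathcal S''$ with $E_i' := E_i \cap H'$, $E_i'' := E_i \cap H''$. Now $E_1' = E_1 \cap F = F = E_2'$ and $E_3' = E_3 \cap F = G \subseteq F = E_1'$, which is (3); and $E_1'' \cap E_2'' = F \cap \Im P = \Ker P \cap \Im P = 0$, which is (2). (If $F=0$ this degenerates to $\mathcal S' = (0;0,0,0)$, $\mathcal S'' = \mathcal S$, which is allowed.)

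It remains to build $P$, and this is the only place the hypothesis is used. Let $W := (E_1 \cap E_2) + E_3 = F + E_3$, which is closed by assumption, and note $G = F \cap E_3 \subseteq W$. Decompose orthogonally $F = G \oplus F_0$ with $F_0 := F \cap G^\perp$, and $E_3 = G \oplus \tilde E$ with $\tilde E := E_3 \cap G^\perp$. Since $G$ is a closed subspace of $W$ we have $W = G \oplus (W \cap G^\perp)$ orthogonally, and an easy computation (write an element of $W$ as $f+e$ with $f\in F$, $e\in E_3$, and split each into its $G$- and $G^\perp$-parts) gives $W \cap G^\perp = F_0 + \tilde E$; in particular $V := F_0 + \tilde E$ is closed. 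Moreover $F_0 \cap \tilde E \subseteq F \cap E_3 \cap G^\perp = 0$, so $F_0$ and $\tilde E$ are algebraically complementary in $V$, and Lemma \ref{lemma:decompose} yields a bounded idempotent $R \in B(V)$ with $\Im R = F_0$ and $\Im(I_V-R) = \tilde E$ (so $\Ker R = \tilde E$). Using the orthogonal decomposition $H = G \oplus V \oplus W^\perp$, let $P$ act as $0$ on $G$, as $I_V - R$ on $V$, and as the identity on $W^\perp$.

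Then $P$ is a bounded idempotent, and $\Ker P = G \oplus \Ker(I_V - R) \oplus 0 = G \oplus \Im R = G \oplus F_0 = F$. Since $I-P$ acts as the identity on $G$, as $R$ on $V$, and as $0$ on $W^\perp$, for $x\in E_3$ we write $x = g+e$ with $g\in G$ and $e\in\tilde E \subseteq V$, obtaining $(I-P)x = g + Re = g \in G$ because $\tilde E = \Ker R$. Thus $(I-P)(E_3) \subseteq G$, which completes the construction and hence the proof.

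\textbf{Expected main obstacle.} The real subtlety is the handling of $E_3$. One cannot take the naive complement $H'' = (E_1\cap E_2)^\perp$, since then $E_3 \cap H''$ need not be a complement of $G$ in $E_3$ (this already fails for $\mathcal S = (\mathbb C^2;\mathbb C(1,1),\mathbb C(1,1),\mathbb C(1,0))$). The remedy is to split off the triple intersection $G = E_1\cap E_2\cap E_3$ first and invoke closedness of $(E_1\cap E_2)+E_3$ only for the reduced, linearly independent pair $F_0,\tilde E$, where Lemma \ref{lemma:decompose} applies cleanly; getting the bookkeeping of the three orthogonal pieces $G$, $V$, $W^\perp$ consistent is the only point that needs care.
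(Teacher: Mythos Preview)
Your proof is correct. It takes a genuinely different route from the paper's.

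The paper argues via the two--subspace angle decomposition: since $(E_1\cap E_2)+E_3$ is closed, one may pass (up to isomorphism) to a system in which the projections onto $F=E_1\cap E_2$ and onto $E_3$ commute, and then simply take $H'=F$, $H''=F^{\perp}$ orthogonally. Your observation in the ``expected obstacle'' paragraph that the naive orthogonal complement $F^{\perp}$ does not split $E_3$ correctly is exactly why the paper inserts the preparatory ``we may and do assume that there exists no angle part'' step before using $F^{\perp}$.

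You instead stay in the original Hilbert space and manufacture a \emph{non}-orthogonal complement of $F$ that is automatically compatible with $E_3$: split off $G=F\cap E_3$ first, note that $F_0=F\ominus G$ and $\tilde E=E_3\ominus G$ are algebraic complements in the closed space $V=(F+E_3)\ominus G$, invoke Lemma~\ref{lemma:decompose} only on $V$, and extend by $0$ on $G$ and by $I$ on $W^{\perp}$. This is more elementary---it never touches the $c,s$ angle operators---and it produces an explicit $H''=\tilde E\oplus W^{\perp}$. The paper's approach is terser for a reader fluent in two--subspace theory and matches the parallel Lemma~4.6 that follows; your approach is self-contained and makes the role of the closedness hypothesis (namely, that $V$ is closed so Lemma~\ref{lemma:decompose} applies) completely transparent.
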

\begin{proof}
Consider two subspace decomposition for $F:= E_1 \cap E_2$ and $E_3$. 
Since $(E_1 \cap E_2) + E_3$ is closed, we may and do assume that 
there exits no angle part for $F:= E_1 \cap E_2$ and $E_3$  up 
to isomorphism. Therefore we have the following decomposition: 
\begin{align*} 
H & = (E_1 \cap E_2) \cap E_3  \oplus (E_1 \cap E_2) \cap E_3^{\perp} 
    \oplus  (E_1 \cap E_2)^{\perp} \cap E_3    
    \oplus  (E_1 \cap E_2)^{\perp} \cap E_3^{\perp} \\
E_1 \cap E_2 & = (E_1 \cap E_2 \cap E_3) \oplus (E_1 \cap E_2) \cap E_3^{\perp}
    \oplus \ \ \ \ \ \ \ \ \ \ 0 \ \ \ \ \ \ \ \ \  \oplus 
\ \ \ \ \ \ \ \ \ 0  \\
E_3 & = (E_1 \cap E_2 \cap E_3) \oplus \ \ \ \ \ \ \ \ \ 0 
\ \ \ \ \ \ \ \ \  
\oplus (E_1 \cap E_2)^{\perp} \cap E_3   \oplus \ \ \ \ \ \ \ \ \ \ 0
\end{align*}
Then put $H' := E_1 \cap E_2$ and $H'' := (E_1 \cap E_2)^{\perp}$. 
Consider corresponding decompositions for $H = H' \oplus H''$. 
By taking intersections with  $H'$ and $H''$, let 
\[
E_1' = E_1 \cap (E_1 \cap E_2) = E_1 \cap E_2, \ \ 
E_2' = E_2 \cap (E_1 \cap E_2) = E_1 \cap E_2, \ \ 
\]
and 
\[
       E_3' = E_3 \cap (E_1 \cap E_2) = E_1 \cap E_2 \cap E_3 
\]
\begin{align*}
E_1'' & = E_1 \cap (E_1 \cap E_2)^{\perp}, \ \ \\
E_2'' & = E_2 \cap (E_1 \cap E_2)^{\perp} , \ \ \\
E_3'' & = E_3 \cap (E_1 \cap E_2)^{\perp}
\end{align*}
Then clearly we have that 
\[
E_1' + E_1'' = E_1 \ \ E_2' + E_2'' = E_2 \ \ \text{ and } E_3' + E_3'' = E_3
\]
Moreover we have that $E_3' \subset E_1' = E_2'$ and $E_1'' \cap E_2'' = 0$. 
 
\end{proof}

\begin{lemma}
Let $\mathcal S = (H;E_1,E_2,E_3)$ be a system 
of three subspaces in a Hilbert space $H$. Suppose that 
$(E_1 \vee E_2) + E_3$ is closed. Then 
there exist systems $\mathcal S ' = (H';E_1',E_2',E_3')$ and 
$\mathcal S '' = (H'';E_1'',E_2'',E_3'')$ of three subspaces such that 
\begin{enumerate}
\item $\mathcal S = (H;E_1,E_2,E_3) 
\cong (H';E_1',E_2',E_3') \oplus (H'';E_1'',E_2'',E_3'')$.
\item $ E_1'' \vee E_2'' = H''$
\item $E_1'= E_2'=0 \subset E_3'$ \ \ \text {(\lq\lq distributive component ")} \end{enumerate}
\label{lemma:vee}
\end{lemma}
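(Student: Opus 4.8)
The plan is to run the proof of Lemma \ref{lemma:cap-zero} in its dual form, replacing the meet $E_1\cap E_2$ by the join $G:=E_1\vee E_2$, so that the distributive component we split off is one in which $E_1'=E_2'=0$ (rather than one in which $E_3'\subset E_1'=E_2'$). First I would apply the two-subspace theorem to the pair $(H;G,E_3)$. Since $G+E_3=(E_1\vee E_2)+E_3$ is closed, the operator $s$ on the generic part of this pair is invertible, so on that part $G$ and $E_3$ are algebraically complementary; by Lemma \ref{lemma:decompose} the generic part is then isomorphic to an orthogonal-complement pair, and composing the two-subspace unitary with an invertible operator supported on the generic part yields an isomorphism of $(H;G,E_3)$ onto a system with no angle part. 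This isomorphism applies to the whole of $\mathcal S$, because $E_1,E_2\subseteq G$ forces their images to lie inside the image of $G$. Replacing $\mathcal S$ by this isomorphic copy, we may assume
\[
H=(G\cap E_3)\oplus(G\cap E_3^{\perp})\oplus(G^{\perp}\cap E_3)\oplus(G^{\perp}\cap E_3^{\perp}),
\]
with $G=(G\cap E_3)\oplus(G\cap E_3^{\perp})$ and $E_3=(G\cap E_3)\oplus(G^{\perp}\cap E_3)$.

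Next I would put $H'':=G=E_1\vee E_2$ and $H':=G^{\perp}=(E_1\vee E_2)^{\perp}$, take the induced orthogonal decomposition $H=H'\oplus H''$, and set $E_i':=E_i\cap H'$ and $E_i'':=E_i\cap H''$. Since $E_1,E_2\subseteq G=H''$, we get $E_1'=E_2'=0$, $E_1''=E_1$ and $E_2''=E_2$, so the splittings $E_1=E_1'\oplus E_1''$ and $E_2=E_2'\oplus E_2''$ are automatic, while $E_3=E_3'\oplus E_3''$ with $E_3'=G^{\perp}\cap E_3$ and $E_3''=G\cap E_3$ is exactly the displayed decomposition of $E_3$. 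Hence $\mathcal S$ is internally the orthogonal direct sum $(H';E_1',E_2',E_3')\oplus(H'';E_1'',E_2'',E_3'')$, which, combined with the preceding reduction, gives (1). For (2), $E_1''\vee E_2''=E_1\vee E_2=G=H''$ because $H''$ is closed. For (3), $E_1'=E_2'=0\subseteq E_3'$ is immediate, so $\mathcal S'=(H';0,0,E_3')$ is the required distributive component.

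The only nonroutine ingredient is the suppression of the angle part between $G$ and $E_3$; this is the exact maneuver already performed --- under the hypothesis that $(E_1\cap E_2)+E_3$ is closed --- inside the proof of Lemma \ref{lemma:cap-zero}, and it relies only on the earlier observation that closedness of the algebraic sum of two subspaces makes the operator $s$ on their generic part invertible, whereupon Lemma \ref{lemma:decompose} turns that part into an orthogonal-complement pair. All remaining verifications are formal duals of those in Lemma \ref{lemma:cap-zero}.
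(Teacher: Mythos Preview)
Your proposal is correct and follows essentially the same approach as the paper: apply the two-subspace decomposition to the pair $(G,E_3)$ with $G=E_1\vee E_2$, use closedness of $G+E_3$ to eliminate the angle part up to isomorphism, and then split along $H'=G^{\perp}$, $H''=G$. Your write-up is in fact more explicit than the paper's on two points the paper leaves implicit: why the angle part can be removed (invertibility of $s$ together with Lemma~\ref{lemma:decompose}), and why the isomorphism that removes it is compatible with the full system $\mathcal S$ (because $E_1,E_2\subset G$ are carried into the image of $G$).
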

\begin{proof}
Consider two subspace decomposition for $F:= E_1 \vee E_2$ and $E_3$. 
Since $(E_1 \vee E_2) + E_3$ is closed, we may and do assume that 
there exits no angle part for $F:= E_1 \vee E_2$ and $E_3$  up 
to isomorphism. Therefore we have the following decomposition: 
\begin{align*} 
H & = (E_1 \vee E_2) \cap E_3  \oplus (E_1 \vee E_2) \cap E_3^{\perp} 
    \oplus  (E_1 \vee E_2)^{\perp} \cap E_3    
    \oplus  (E_1 \vee E_2)^{\perp} \cap E_3^{\perp} \\
E_1 \vee E_2 & = (E_1 \vee E_2) \cap E_3 
\oplus (E_1 \vee E_2) \cap E_3^{\perp}
    \oplus \ \ \ \ \ \ \ \ 0 \ \ \ \ \ \ \ \ \oplus \ \ \ \ \ \ \ \ 0  \\
E_3 & = (E_1 \vee E_2) \cap E_3 \oplus \ \ \ \ \ \ \ \ \ 0 \ \ \ \ \ \ \ \ 
\oplus (E_1 \vee E_2)^{\perp} \cap E_3   \oplus \ \ \ \ \ \ \ \ 0 
\end{align*}
Then put $H' := (E_1 \vee E_2)^{\perp}$ and $H'' := (E_1 \vee E_2)$. 
Consider corresponding decompositions for $H = H' \oplus H''$. 
By taking intersections with  $H'$ and $H''$, let 
\[
E_1'  = (E_1 \vee E_2)^{\perp} E_1 = 0, \ \ 
E_2'  = (E_1 \vee E_2)^{\perp} E_2 = 0, \ \ 
       E_3' = (E_1 \vee E_2)^{\perp} \cap E_3 
\]
\begin{align*}
E_1'' & = (E_1 \vee E_2) \cap E_1 = E_1, \ \ \\
E_2'' & = (E_1 \vee E_2) \cap E_2 = E_2, \ \ \\
E_3'' & = (E_1 \vee E_2) \cap E_3,
\end{align*}
Then clearly we have that 
\[
E_1' + E_1'' = E_1 \ \ E_2' + E_2'' = E_2 \ \ \text{ and } E_3' + E_3'' = E_3
\]
Moreover we have that $E_1' = E_2'=  0 \subset E_3'$ and $E_1'' \vee E_2'' = H''$. 
 
\end{proof}

\begin{lemma}  Let $H$ be a Hilbert space and $H_1$ and $H_2$ be 
two closed subspaces of $H$. Assume that 
$H = H_1 + H_2$ \ and  \ $H_1 \cap H_2 = 0$. Let 
$F_i$ be an algebraic linear subspace of $H_i$ for $i = 1,2$. 
Consider their algebraic linear sum  $F := F_1 + F_2$ in $H$. 
Then $F$ is closed if and only if $F_1$ and $F_2$ are closed. 
\label{lemma:closedness}
\end{lemma}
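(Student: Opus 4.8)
The plan is to reduce the whole statement to the existence of the bounded idempotent associated to the algebraic complementarity of $H_1$ and $H_2$. Since $H = H_1 + H_2$ and $H_1 \cap H_2 = 0$, Lemma~\ref{lemma:decompose} provides an idempotent $P \in B(H)$ with $\Im P = H_1$ and $\Im(1-P) = H_2$. Thus $P$ fixes $H_1$ pointwise and kills $H_2$, while $1-P$ fixes $H_2$ pointwise and kills $H_1$; in particular $P$ and $1-P$ are bounded. The first thing I would record is the purely algebraic identity
\[
F_1 = F \cap H_1 = P(F), \qquad F_2 = F \cap H_2 = (1-P)(F).
\]
The inclusions $F_1 \subseteq F \cap H_1$ are trivial, and if $x \in F \cap H_1$ we write $x = x_1 + x_2$ with $x_i \in F_i \subseteq H_i$; then $x - x_1 = x_2 \in H_1 \cap H_2 = 0$, so $x = x_1 \in F_1$. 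This is the only place the hypothesis $H_1 \cap H_2 = 0$ enters, securing uniqueness of the decomposition.

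For the implication "$F_1$ and $F_2$ closed $\Rightarrow$ $F$ closed", I would take a sequence $x_n = y_n + z_n$ in $F$ with $y_n \in F_1$, $z_n \in F_2$, and $x_n \to x$ in $H$. Applying the bounded operator $P$ and using $Py_n = y_n$, $Pz_n = 0$, we get $y_n = P x_n \to P x$, so $Px \in F_1$ by closedness of $F_1$; similarly $z_n = (1-P)x_n \to (1-P)x \in F_2$. Hence $x = Px + (1-P)x \in F_1 + F_2 = F$, proving $F$ closed.

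For the converse, "$F$ closed $\Rightarrow$ $F_1$ and $F_2$ closed", there is essentially nothing left to do: by the identity above, $F_1 = F \cap H_1$ and $F_2 = F \cap H_2$ are intersections of closed subspaces of $H$, hence closed.

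The only genuine input beyond bookkeeping is the boundedness of the idempotent $P$ — equivalently, that the oblique projection of $H$ onto $H_1$ along $H_2$ is continuous — which is exactly the content of Lemma~\ref{lemma:decompose} and ultimately rests on the closed graph theorem. I do not anticipate a real obstacle here; once $P \in B(H)$ is in hand, the lemma is just the observation that a bounded idempotent lets one treat closedness questions coordinatewise with respect to the decomposition $H = H_1 \oplus H_2$.
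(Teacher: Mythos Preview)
Your proof is correct and rests on the same ingredient as the paper's, namely Lemma~\ref{lemma:decompose}: the paper invokes the isomorphism $(H;H_1,H_2)\cong(H;M,M^{\perp})$ to reduce to the orthogonal case where the statement is clear, while you use the equivalent idempotent formulation and carry out the coordinatewise argument explicitly. The two are the same proof in different dress, yours being the more detailed version.
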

\begin{proof}
There exists a closed subspace $M \subset H$ such that 
$(H;H_1,H_2)$ is isomorphic to  $(H;M,M^{\perp})$. Then 
the statement is redeuced to this case where the statement is 
clear. 
\end{proof}

By the Lemma above, we have immediately have the followings:

\begin{lemma}
Let $\mathcal S = (H;E_1,E_2,E_3)$ be a system 
of three subspaces in a Hilbert space $H$. Suppose that 
there exist systems $\mathcal S ' = (H';E_1',E_2',E_3')$ and 
$\mathcal S '' = (H'';E_1'',E_2'',E_3'')$ of three subspaces such that 
\[
\mathcal S = (H;E_1,E_2,E_3) 
\cong (H';E_1',E_2',E_3') \oplus (H'';E_1'',E_2'',E_3'').
\]
Then the followings hold:
\begin{enumerate}
\item 
$(E_1 \cap E_2) + E_3$ is closed in $H$ if and only if 
$(E_1' \cap E_2') + E_3'$ is closed in $H'$ and 
$(E_1'' \cap E_2'') + E_3''$ is closed in $H''$. 
\item
$(E_1 \vee E_2) + E_3$ is closed in $H$ if and only if 
$(E_1' \vee E_2') + E_3$ is closed in $H'$ and 
$(E_1' \vee E_2') + E_3'$ is closed in $H''$. 
\item 
$ E_1 \cap E_2 = 0$ if and only if 
$ E_1' \cap E_2' = 0$ and $ E_1'' \cap E_2'' = 0$. 
\item
$E_1 \vee E_2 = H$ if and only if 
$E_1' \vee E_2' = H'$ and $E_1'' \vee E_2 ''= H''$
\end{enumerate}
\label{lemma:decomposed}
\end{lemma}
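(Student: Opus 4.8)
\emph{Proof plan.} The plan is to reduce at once to the case of a genuine orthogonal direct sum, then distribute the lattice operations over it and invoke Lemma \ref{lemma:closedness}. By hypothesis there is a bounded invertible $\varphi \colon H \to H' \oplus H''$ with $\varphi(E_i) = E_i' \oplus E_i''$ for $i = 1,2,3$. Since both $\varphi$ and $\varphi^{-1}$ are bounded, $\varphi$ carries closed subspaces to closed subspaces, satisfies $\varphi(A \cap B) = \varphi(A) \cap \varphi(B)$ and $\varphi(A + B) = \varphi(A) + \varphi(B)$ for subspaces $A, B$, commutes with closure ($\varphi(\overline{A}) = \overline{\varphi(A)}$), and sends $0$ to $0$ and $H$ onto $H' \oplus H''$. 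Consequently each of the four assertions for $\mathcal S$ is equivalent to the corresponding assertion for the system $(H' \oplus H''; E_1' \oplus E_1'', E_2' \oplus E_2'', E_3' \oplus E_3'')$, and I may and do assume henceforth that $H = H' \oplus H''$ is an \emph{orthogonal} direct sum with $E_i = E_i' \oplus E_i''$, $E_i' \subset H'$ and $E_i'' \subset H''$ closed.

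Next I would record the elementary distributivity of $\wedge$ and $\vee$ over this orthogonal sum: for closed $F', G' \subset H'$ and $F'', G'' \subset H''$,
\[
(F' \oplus F'') \cap (G' \oplus G'') = (F' \cap G') \oplus (F'' \cap G''), \qquad (F' \oplus F'') \vee (G' \oplus G'') = (F' \vee G') \oplus (F'' \vee G''),
\]
the first by inspecting the two components (using that the orthogonal projections of $H$ onto $H'$ and $H''$ are continuous), the second because the closure of an orthogonal sum of subspaces is the orthogonal sum of their closures. Applying these with $F = E_i$, $G = E_j$ gives $E_i \cap E_j = (E_i' \cap E_j') \oplus (E_i'' \cap E_j'')$ and $E_i \vee E_j = (E_i' \vee E_j') \oplus (E_i'' \vee E_j'')$. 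Since an orthogonal direct sum $U' \oplus U''$ is $\{0\}$ (resp.\ all of $H' \oplus H''$) exactly when $U' = 0$ and $U'' = 0$ (resp.\ $U' = H'$ and $U'' = H''$), assertions (3) and (4) are immediate.

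For (1) and (2) I would combine these identities with Lemma \ref{lemma:closedness}. Using the intersection identity, the algebraic sum $(E_i \cap E_j) + E_k$ equals $F_1 + F_2$ where $F_1 := (E_i' \cap E_j') + E_k'$ is a linear subspace of $H'$ and $F_2 := (E_i'' \cap E_j'') + E_k''$ is a linear subspace of $H''$; here $H = H' + H''$ with $H' \cap H'' = 0$ and $H', H''$ closed, so Lemma \ref{lemma:closedness} says $F_1 + F_2$ is closed in $H$ iff $F_1$ is closed in $H'$ and $F_2$ is closed in $H''$, which is exactly (1). Assertion (2) is the same argument with $E_i \vee E_j$ in place of $E_i \cap E_j$: $(E_i \vee E_j) + E_k = \bigl[(E_i' \vee E_j') + E_k'\bigr] + \bigl[(E_i'' \vee E_j'') + E_k''\bigr]$, and Lemma \ref{lemma:closedness} again yields the stated equivalence. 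The converse implications in all four items are contained in these same equivalences.

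The only genuinely delicate point is the reduction in the first paragraph: because the isomorphism of systems is by a \emph{bounded invertible} operator and not by a unitary, one cannot pretend the ambient decomposition is orthogonal, so it is essential to observe that the four properties in question are topological/algebraic (closedness, being $0$, being $H$, equality of subspaces) and hence transported intact by $\varphi$; once that reduction is in place, the rest is the routine distributivity computation together with the previously established Lemma \ref{lemma:closedness}, exactly as the paper's phrase ``by the Lemma above, we immediately have'' suggests.
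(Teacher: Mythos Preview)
Your proof is correct and follows essentially the same approach as the paper: the paper simply writes ``By the Lemma above, we immediately have the followings,'' referring to Lemma~\ref{lemma:closedness}, and you have spelled out precisely the reduction (via the bounded invertible $\varphi$) to an orthogonal decomposition together with the distributivity of $\cap$, $+$, and $\vee$ over that decomposition, before invoking Lemma~\ref{lemma:closedness} for items (1) and (2). There is no substantive difference in strategy; you have merely supplied the details that the paper leaves to the reader.
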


\begin{thm}
 Let  $\mathcal S = (H;E_1,E_2,E_3)$ be a 
system of three subspaces in a Hilbert space $H$.
Then the followings are equivalent: 
\begin{enumerate}
\item Linear sums $(E_i \vee E_j) + E_k$ and $(E_i \cap E_j) + E_k$ 
are closed for $i,j,k \in  \{1,2,3\}$ with $i \not= j \not= k \not= i$. 

\item $\mathcal S$ has a Brenner type decomposition. 
\end{enumerate}
\end{thm}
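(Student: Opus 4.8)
The implication $(2)\Rightarrow(1)$ is essentially the remark recorded just before Example~7, and I would dispose of it first. In the Brenner form $(H;F_1,F_2,F_3)$ one has $F_i\cap F_j=S\oplus N_k$ (with $k$ the remaining index) and $F_i\vee F_j=\overline{F_i+F_j}=S\oplus N_1\oplus N_2\oplus N_3\oplus M_i\oplus M_j\oplus Q$, the one point being that the possibly non-closed summand $Q_i+Q_j$ of $F_i+F_j$ is replaced by $Q_i\vee Q_j=Q$ on passing to the closure. Hence $(F_i\vee F_j)+F_k$ and $(F_i\cap F_j)+F_k$ are orthogonal direct sums of closed subspaces, so closed, and since an isomorphism of systems carries a closed linear sum to a closed linear sum, $\mathcal S$ satisfies (1).

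The main work is $(1)\Rightarrow(2)$, and the plan is to split off distributive direct summands one index-pair at a time. First I would apply Lemma~\ref{lemma:cap-zero}, in each of the three labelings obtained by letting the distinguished pair run through $\{1,2\},\{2,3\},\{3,1\}$, to peel off three ``$\wedge$-distributive'' summands, ending with a summand $\mathcal S_1=(H_1;E_1^{1},E_2^{1},E_3^{1})$ in which $E_i^{1}\cap E_j^{1}=0$ for all $i\ne j$. At each step the hypothesis of Lemma~\ref{lemma:cap-zero}, closedness of the relevant $(E_i\cap E_j)+E_k$ on the summand currently under consideration, is inherited from the same hypothesis on $\mathcal S$ by Lemma~\ref{lemma:decomposed}(1) (in the appropriate labeling), while Lemma~\ref{lemma:decomposed}(3) keeps the equalities $E_i\cap E_j=0$ already secured from being spoiled by the subsequent splittings. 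Then I would apply Lemma~\ref{lemma:vee} three times in the same fashion, peeling off three ``$\vee$-distributive'' summands, using Lemma~\ref{lemma:decomposed}(2) to carry the hypothesis ``$(E_i\vee E_j)+E_k$ closed'' down to the current summand and Lemma~\ref{lemma:decomposed}(3),(4) to see that the conditions $E_i\cap E_j=0$ and $E_i\vee E_j=H$ simply accumulate. What is left is a ``core'' $\mathcal S_0=(H_0;E_1^{0},E_2^{0},E_3^{0})$ with $E_i^{0}\cap E_j^{0}=0$ and $E_i^{0}\vee E_j^{0}=H_0$ for all $i\ne j$, together with the six distributive summands peeled off along the way.

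Finally I would identify all the pieces. For the core: if $E_i^{0}=0$ for some $i$, then $E_j^{0}=E_i^{0}\vee E_j^{0}=H_0$ for $j\ne i$, so $H_0=E_j^{0}\cap E_k^{0}=0$; by the dual argument $E_i^{0}=H_0$ also forces $H_0=0$; thus either $H_0=0$ (and the double-triangle part is $0$) or $H_0\ne 0$, every $E_i^{0}$ is proper and nonzero, and $\mathcal S_0$ forms a double triangle, which we take as $(Q;Q_1,Q_2,Q_3)$. Each $\wedge$-distributive summand produced by Lemma~\ref{lemma:cap-zero} has, up to the isomorphism built into that lemma and up to a permutation of its three subspaces, the shape $(H';H',H',G)$ with $G$ a closed subspace of $H'$, and the orthogonal splitting $H'=G\oplus(H'\cap G^{\perp})$ decomposes it into an ``$S$-type'' piece $(G;G,G,G)$ and an ``$N$-type'' piece $(H'\cap G^{\perp};0,H'\cap G^{\perp},H'\cap G^{\perp})$ whose zero slot, hence the subscript of the $N$-block, is the index singled out by the labeling used; likewise each $\vee$-distributive summand has shape $(H';0,0,G)$ and splits orthogonally into an ``$M$-type'' piece (whose single nonzero subspace, hence the subscript of the $M$-block, is again dictated by the labeling) and an ``$L$-type'' piece $(H'\cap G^{\perp};0,0,0)$. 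Regrouping the (possibly zero) $S$-type pieces into a single $S$, the three $N$-type pieces into $N_1,N_2,N_3$, the three $M$-type pieces into $M_1,M_2,M_3$, the (possibly zero) $L$-type pieces into a single $L$, and reading off $F_1,F_2,F_3$ block by block, one arrives exactly at the decomposition demanded by the definition of a Brenner type decomposition. I expect the genuinely delicate point to be none of the individual lemma applications — the analytic content is already packaged inside Lemmas~\ref{lemma:cap-zero} and~\ref{lemma:vee} via the two-subspace structure theorem — but rather the bookkeeping of the middle paragraph, namely verifying that every hypothesis invoked really does persist through the preceding splittings; Lemma~\ref{lemma:decomposed} is designed precisely so that each of these verifications reduces to inspection.
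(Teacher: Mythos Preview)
Your proof is correct and follows essentially the same strategy as the paper's: iterate Lemmas~\ref{lemma:cap-zero} and~\ref{lemma:vee}, using Lemma~\ref{lemma:decomposed} to transport the closedness hypotheses and the already-secured conditions $E_i\cap E_j=0$, $E_i\vee E_j=H$ to each successive summand, until what remains is a double triangle. The only difference is cosmetic: the paper performs one $\wedge$-splitting and one $\vee$-splitting for the pair $\{1,2\}$ and then simply says ``step by step'' for the rest, whereas you carry out all three $\wedge$-splittings first, then all three $\vee$-splittings, and you spell out explicitly how each peeled-off distributive summand decomposes further into its $S$-, $N_k$-, $M_k$-, or $L$-type blocks (which the paper leaves implicit).
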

\begin{proof}It is trival that (2) implies (1). Conversely, 
assume (1). 
Since 
$(E_1 \cap E_2) + E_3$ is closed, 
there exist systems $\mathcal S ' = (H';E_1',E_2',E_3')$ and 
$\mathcal S '' = (H'';E_1'',E_2'',E_3'')$ of three subspaces such that 
\begin{enumerate}
\item $\mathcal S = (H;E_1,E_2,E_3) 
\cong (H';E_1',E_2',E_3') \oplus (H'';E_1'',E_2'',E_3'')$.
\item $ E_1'' \cap E_2'' = 0$
\item $E_3' \subset E_1' = E_2'$ \ \ \text {(\lq\lq distributive component ")} 
\end{enumerate}

Since 
$(E_1'' \vee E_2'') + E_3''$ is closed. Then 
there exist systems $\mathcal S ''' = (H''';E_1''',E_2''',E_3''')$ and 
$\mathcal S '''' = (H'''';E_1'''',E_2'''',E_3'''')$ 
of three subspaces such that 
\begin{enumerate}
\item $\mathcal S'' = (H'';E_1'',E_2'',E_3'') 
\cong (H''';E_1''',E_2''',E_3''') \oplus (H'''';E_1'''',E_2'''',E_3'''')$.
\item $ E_1'''' \vee E_2'''' = H''''$
\item $E_1'''= E_2'''=0 \subset E_3'''$ \ \ \text {(\lq\lq distributive component ")} 
\end{enumerate}
Therefore 
\begin{align*}
\mathcal S & = (H;E_1,E_2,E_3)\\ 
& \cong (H';E_1',E_2',E_3') \oplus (H''';E_1''',E_2''',E_3''') \oplus (H'''';E_1'''',E_2'''',E_3'''')
\end{align*}
Moreover  $ E_1'''' \vee E_2'''' = H''''$ and $ E_1'''' \cap E_2'''' = 0$. 
We shall split out a distributive part and a double triangle part 
step by step using Lemma \ref{lemma:cap-zero}, Lemma \ref{lemma:vee}, 
Lemma \ref{lemma:closedness} and  Lemma \ref{lemma:decomposed}. 
Since closedness 
property is preserved after we split out one step, we can 
proceed  the next step.

Finally we can split out a double 
triangle part $(Q;Q_1,Q_2,Q_3)$,  because  in the final step 
we have that   
$$
Q_i \vee Q_j = Q, \ \ \ \text{ and } \ \ \ Q_i \wedge Q_j = 0,  
\ \ (i \not= j , i, j = 1,2,3). 
$$
and the rest part consists of finite direct sum of distributive systems 
of three subspaces. Hence we have (2). 
\end{proof}

\section{dense decomposition }

In an infinite-dimensional Hilbert space $H$, the algebraic linear sum 
$H' + H''$ of closed subspaces $H'$ and $H''$ is not necessarily 
closed. Therefore we cannot expect direct sum decomposition in general. 

\bigskip
\noindent
{\bf Example 8.}
Let $\mathcal S = (H;E_1,E_2,E_3)$ be a system 
of three subspaces in a Hilbert space $H$. Suppose that 
$E_1 \cap E_2 = 0$ and $E_1 + E_2$ is not closed and 
$E_1 \vee E_2 = \overline{E_1 + E_2} = E_3 = H$.  
Put $H' = E_1$ and $H'' = E_2$. Then $H$ has a 
"dense decomposition" $H = \overline{H' + H''}$ such that 
$H' \cap H'' = 0$. $E_1 = E_1 + 0$, $E_2 = 0 + E_2$ and 
$E_3 =  \overline{E_1 + E_2}$
 
\bigskip
\noindent
{\bf Definition.} Let $\mathcal S = (H;E_1,E_2,E_3)$ be a system 
of three subspaces in a Hilbert space $H$ and 
let $\mathcal S^{(k)} = (H^{(k)};E_1^{(k)},E_2^{(k)},E_3^{(k)})$ be systems 
of three subspaces in a Hilbert space $H^{(k)}$ for $k = 1,2,\dots,m$. Then 
$\mathcal S$ is said to have  a dense decomposition 
\[
\mathcal S \supset {\mathcal S}^{(1)} \oplus_{alg} 
\dots \oplus_{alg} {\mathcal S}^{(m)} \ \ \ (dense)
\]
if $H = \overline{H^{(1)} \oplus_{alg} \dots \oplus_{alg} H^{(m)}}$, 
$E_1 = \overline{E_1^{(1)} \oplus_{alg} \dots \oplus_{alg} E_1^{(m)}}$, \\
$E_2 = \overline{E_2^{(1)} \oplus_{alg} \dots \oplus_{alg} E_2^{(m)}}$ and 
$E_3 = \overline{E_3^{(1)} \oplus_{alg} \dots \oplus_{alg} E_3^{(m)}}$. 
In particular, $H^{(1)}, \dots.H^{(m)}$ are linearly independent.

In geneal, we define a dense decomposition of a system of 
$n$-subspaces similarly. 

For example,any system of two subspaces 
$\mathcal S = (H;E_1,E_2)$has a dense decomposition satisfying distributive law. In fact, 
$$
(E_1\cap E_2) \oplus_{alg} (K \oplus O) \oplus_{alg} 
\Im\left(
\begin{array}
{@{\,}cccc@{\,}}c^{2}&cs\\ cs&s^{2}
\end{array}
\right)
\oplus_{alg} (E_1\cap E_2^{\perp})
\oplus_{alg} (E_1^{\perp} \cap E_2) 
\oplus_{alg} (E_1^{\perp} \cap E_2^{\perp}).
$$
is a dense decomposition of $H$.
We expect that a certain class of  systems
$\mathcal S$ of three subspaces has a 
dense decomposition with a distributive part $H^{dis}$, a double triangle part 
$Q$ and six kinds of  pentagon parts 
$$
H^{\sigma} = \overline{\sum_{\sigma \in S_3} 
(E^{\sigma}_{\sigma(1)} \oplus_{alg} E^{\sigma}_{\sigma(3)})}
$$
(with $E^{\sigma}_{\sigma (3)} 
\supset E^{\sigma}_{\sigma (2)}$), 
for a permutation $\sigma \in S_3$ on three letters $\{1,2,3\}$. 

A distributive part is an algebraic sum of $2^3 = 8$ components
$$
H^{dis} = S \oplus_{alg} N_1 \oplus_{alg} 
N_2 \oplus_{alg} N_3 \oplus_{alg} 
      M_1 \oplus_{alg} M_2 \oplus_{alg} M_3 \oplus_{alg} L 
$$
and a double triangle part is a Hilbert space $Q$ with 
\[
Q = \overline{Q_1 \oplus_{alg} Q_2}
 = \overline{Q_2 \oplus_{alg} Q_3} = \overline{Q_3 \oplus_{alg} Q_1}.
\]
Then $(H^{\sigma};E^{\sigma}_1,E^{\sigma}_2,E^{\sigma}_3)$ 
form pentagons (with $E^{\sigma}_{\sigma (3)} 
\supset E^{\sigma}_{\sigma (2)}$), 
so that 
$$
E^{\sigma}_{\sigma (1)}\vee E^{\sigma}_{\sigma (2)} = H^{\sigma}, \ \  
E^{\sigma}_{\sigma (1)}
\wedge E^{\sigma}_{\sigma (3)} = 0,  
\text{ and } E^{\sigma}_{\sigma (3)} \supset E^{\sigma}_{\sigma (2)}
\text{ with  } E^{\sigma}_{\sigma (3)} \not= E^{\sigma}_{\sigma (2)}. 
$$
In this way 
$$
S \oplus_{alg} N_1 \oplus_{alg} 
N_2 \oplus_{alg} N_3 \oplus_{alg} 
      M_1 \oplus_{alg} M_2 \oplus_{alg} M_3 \oplus_{alg} L 
\oplus_{alg} Q  \oplus_{alg}
\sum_{\sigma \in S_3} 
(E^{\sigma}_{\sigma(1)} \oplus_{alg} E^{\sigma}_{\sigma(3)})
$$
is dense in $H$.

But we do not know whether this kinds of decompostion 
 hold or not in general.

Finally we give a partial result which gives a condition 
on a system to have a (dense) decomposition containing a pentagon. 

\begin{lemma}
Let $\mathcal S = (H;E_1,E_2,E_3)$ be a system 
of three subspaces in a Hilbert space $H$. Suppose that 
there exist systems $\mathcal S ' = (H';E_1',E_2',E_3')$ and 
$\mathcal S '' = (H'';E_1'',E_2'',E_3'')$ of three subspaces such that 
\begin{enumerate}
\item $\mathcal S = (H;E_1,E_2,E_3) 
\cong (H';E_1',E_2',E_3') \oplus (H'';E_1'',E_2'',E_3'')$.
\item $(H';E_1',E_2',E_3')$  forms a pentagon  (with $E_3' \supset E_2'$). 
\item (distributive component) there exist subspaces $N_1, N_2$  and 
$ M_1$ of $H''$ such that 
$$
H'' = N_1 \oplus N_2 \oplus M_1, 
$$ 
$E_1'' = 0 \oplus N_2 \oplus M_1$, $E_2'' = N_1 \oplus 0 \oplus 0$ and 
$E_3'' = N_1 \oplus N_2 \oplus 0$.  
\end{enumerate}
Then $E_1 \wedge E_2 = 0$, $E_1 \vee E_2 = H$ and $E_2 \varsubsetneqq E_3$. 
\end{lemma}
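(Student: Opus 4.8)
The plan is to reduce to the case of a literal direct sum and then read off each of the three assertions from the explicit descriptions in hypotheses (2) and (3). First I would observe that an isomorphism of systems is implemented by a bounded invertible operator $\varphi$, which is simultaneously a bijection and a homeomorphism; hence $\varphi(E\cap F)=\varphi(E)\cap\varphi(F)$ and $\varphi(\overline{E+F})=\overline{\varphi(E)+\varphi(F)}$ for closed subspaces $E,F$. Consequently the lattice-theoretic statements $E_1\wedge E_2=0$, $E_1\vee E_2=H$ and $E_2\subsetneq E_3$ are invariant under isomorphism of systems, so by hypothesis (1) I may and do assume that $\mathcal S=\mathcal S'\oplus\mathcal S''$ on the nose, that is, $H=H'\oplus H''$ as an orthogonal direct sum with $E_i=E_i'\oplus E_i''$ for $i=1,2,3$.

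Next I would treat the meet. Because $H'\perp H''$ one has $E_1\wedge E_2=(E_1'\cap E_2')\oplus(E_1''\cap E_2'')$. The first summand vanishes: the pentagon hypothesis (2) gives $E_2'\subset E_3'$ and $E_1'\wedge E_3'=0$, whence $E_1'\cap E_2'\subset E_1'\cap E_3'=0$. The second vanishes by the explicit form in (3), since $(0\oplus N_2\oplus M_1)\cap(N_1\oplus0\oplus0)=0$. Hence $E_1\wedge E_2=0$. For the join I would use that over the orthogonal direct sum $H=H'\oplus H''$ the closure of $(U'\oplus U'')+(V'\oplus V'')$ equals $(U'\vee V')\oplus(U''\vee V'')$, so that $E_1\vee E_2=(E_1'\vee E_2')\oplus(E_1''\vee E_2'')$; the pentagon gives $E_1'\vee E_2'=H'$, and (3) gives $E_1''+E_2''=(0\oplus N_2\oplus M_1)+(N_1\oplus0\oplus0)=N_1\oplus N_2\oplus M_1=H''$, so $E_1\vee E_2=H$. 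Finally, $E_2\subset E_3$ holds componentwise ($E_2'\subset E_3'$ from the pentagon, $N_1\oplus0\oplus0\subset N_1\oplus N_2\oplus0$ from (3)), and the strict inclusion $E_2'\subsetneq E_3'$ built into the pentagon already makes the $H'$-components unequal, so $E_2\subsetneq E_3$.

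I do not expect any genuine obstacle here. The only two points that deserve an explicit sentence are that $\wedge$ and $\vee$ are preserved by isomorphisms of systems (which legitimizes the reduction to an honest direct sum) and that $\vee$ distributes over the orthogonal direct sum $H=H'\oplus H''$; once these are in place, the rest is routine bookkeeping against the explicit forms of the pentagon part and the distributive part.
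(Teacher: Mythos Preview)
Your argument is correct and is exactly the routine verification the paper has in mind: the paper's own proof consists of the single sentence ``It is clear.'' Your write-up spells out precisely the two points that make this clear, namely the invariance of $\wedge$, $\vee$ and strict inclusion under isomorphism (reducing to an honest orthogonal direct sum) and the componentwise computation using the pentagon axioms on $H'$ and the explicit distributive form on $H''$.
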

\begin{proof} 
It is clear. 
\end{proof}

We can rearrange the above decomposition such that 
$(H';E_1' \oplus M_1,E_2'\oplus N_1,E_3'\oplus N_1)$  
forms a pentagon  (with $E_3'\oplus N_1 \supset E_2'\oplus N_1$) and 
$(N_2;N_2,0,N_2)$ is a distributive part.

We shall show that the converse of the Lemma above holds in the 
sense of dense decomposition. 

\begin{prop}
Let $\mathcal S = (H;E_1,E_2,E_3)$ be a system 
of three subspaces in a Hilbert space $H$. Suppose that 
$$
E_1 \wedge E_2 = 0, E_1 \vee E_2 = H \text{ and }E_2 \varsubsetneqq E_3. 
$$
We assume that $E_3/E_2$ is finite dimensional. 
Then we have the following:\\

\noindent
{\rm (i)}If $E_3 \not= E_3 \cap (E_1 + E_2)$, 
then there exist subspaces $E_1',E_2',E_3'$ and $N_2$ of $H$  
such that 
\begin{enumerate}
\item  $H \supset E_1' \oplus_{alg} E_3' \oplus_{alg} N_2$   (dense) 
\item $(E_1' \vee E_3';E_1',E_2',E_3')$  forms a pentagon 
 (with $E_3' \supset E_2'$). 
\item  
$E_1 = \overline{E_1' \oplus_{alg}  N_2 }$, 
$E_2 = E_2'$ and 
$E_3 = \overline{E_3' \oplus_{alg}  N_2 }$ 
\end{enumerate}
Moreover $(N_2;N_2,0,N_2)$ is a distributive part. \\
\noindent
{\rm (ii)}If $E_3 = E_3 \cap (E_1 + E_2)$,  
then  there exist subspaces $N_1, N_2$  and 
$ M_1$ of $H$ such that 
$$
H\supset N_1 \oplus_{alg} N_2 \oplus_{alg} M_1 \text{ (dense) }, 
$$
$E_1 = 0 + N_2 + M_1$, $E_2 = N_1 + 0 + 0$ and 
$E_3 = N_1 + N_2 + 0$, 
\end{prop}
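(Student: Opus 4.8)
The plan is to peel off the ``distributive'' part $N_2:=E_1\cap E_3$ common to $E_1$ and $E_3$ and to show that what remains is, densely, a pentagon. Two preliminary remarks drive everything. First, since $E_1\wedge E_2=0$, the quotient map $E_3\to E_3/E_2$ is injective on $N_2$, so $k:=\dim N_2\le d:=\dim(E_3/E_2)<\infty$. Second, one always has the identity
\[
E_3\cap(E_1+E_2)=E_2\oplus_{alg}N_2 ,
\]
proved by writing $x=a+b$ with $x\in E_3\cap(E_1+E_2)$, $a\in E_1$, $b\in E_2\subset E_3$, whence $a=x-b\in E_1\cap E_3=N_2$; the sum is direct because $E_2\cap N_2\subset E_1\wedge E_2=0$. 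Thus hypothesis (ii), $E_3=E_3\cap(E_1+E_2)$, is exactly the statement $E_3=E_2\oplus_{alg}N_2$ (equivalently $k=d$), while (i) is the case $k<d$.

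For (ii) I would set $N_1:=E_2$ and take $M_1$ to be the orthogonal complement of $N_2$ in $E_1$, so that $E_1=N_2\oplus_{alg}M_1$, $E_2=N_1$, $E_3=N_1\oplus_{alg}N_2$, and $N_1+N_2+M_1=E_1+E_2$ is dense in $H$. Linear independence of $N_1,N_2,M_1$ and the equalities $E_i\cap N_1+E_i\cap N_2+E_i\cap M_1=E_i$ ($i=1,2,3$) are routine from $E_1\wedge E_2=0$, $M_1\cap N_2=0$ and $N_2\subset E_1\cap E_3$; this is exactly the asserted dense decomposition with $E_1=0+N_2+M_1$, $E_2=N_1+0+0$, $E_3=N_1+N_2+0$.

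For (i) the heart of the matter is to pick a closed complement $E_1'$ of $N_2$ in $E_1$ so that $V_0:=\overline{E_1'+E_2}$ meets $N_2$ only in $0$. I would let $P$ be the orthogonal projection onto $E_2^{\perp}$ (injective on $E_1$, since $\Ker P\cap E_1=E_1\wedge E_2=0$), put $V':=(P(N_2))^{\perp}\cap E_2^{\perp}$, and set $E_1':=\{x\in E_1:Px\in V'\}$. Then $E_1=E_1'\oplus_{alg}N_2$, and since $P(E_1'+E_2)\subset V'$ with $V'$ closed one gets $V_0\subset P^{-1}(V')$, whence $V_0\cap N_2\subset\{n\in N_2:Pn\in V'\cap P(N_2)=0\}=N_2\cap E_2=0$. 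Because $E_1'+E_2+N_2=E_1+E_2$ is dense and $N_2$ is finite-dimensional, $V_0+N_2=H$, and therefore $(V_0\cap E_3)+N_2=E_3$ (given $x\in E_3$ write $x=v+n$, $v\in V_0$, $n\in N_2\subset E_3$, so $v=x-n\in V_0\cap E_3$). I then take $E_2':=E_2$ and $E_3':=V_0\cap E_3$; this is closed, contains $E_2$, satisfies $E_3'\cap N_2=V_0\cap N_2=0$ and $E_3'\oplus_{alg}N_2=E_3$, and since $\dim(E_3'/E_2)=d-k\ge 1$ one has $E_2\subsetneq E_3'$.

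It remains to check that $(V_0;E_1',E_2',E_3')$ is a pentagon with $E_3'\supset E_2'$ and that $H\supset E_1'\oplus_{alg}E_3'\oplus_{alg}N_2$ densely with $E_1=\overline{E_1'\oplus_{alg}N_2}$, $E_2=E_2'$, $E_3=\overline{E_3'\oplus_{alg}N_2}$. The lattice relations are immediate: $E_1'\vee E_2'=\overline{E_1'+E_2}=V_0=E_1'\vee E_3'$ because $E_2\subset E_3'\subset V_0$, and $E_1'\wedge E_3'\subset(E_1\cap E_3)\cap E_3'=N_2\cap E_3'=0$. For non-degeneracy I would first show that in case (i) one has $E_3\ne H$, $E_1\not\subset E_3$ and $E_2\ne 0$: if $E_3=H$ then $H/E_2$ is finite-dimensional, so $E_1$ (embedding in $H/E_2$) is finite-dimensional and $E_1+E_2$ is closed, forcing $E_1+E_2=H=E_3$ and contradicting (i); if $E_1\subset E_3$ then $E_3\supset\overline{E_1+E_2}=H$, reducing to the previous case; and if $E_2=0$ then $E_1=H\supset E_3$, so again $E_3=E_3\cap(E_1+E_2)$, against (i). From these, $E_1'\ne 0$ (else $E_1=N_2\subset E_3$), $E_1'\ne V_0$ (else $E_2\subset E_1'\subset E_1$ gives $E_2=0$), $E_3'\ne V_0$ (else $E_1=E_1'+N_2\subset V_0=E_3'\subset E_3$), and $E_2',E_3'\notin\{0,V_0\}$ follow at once. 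Linear independence of $E_1',E_3',N_2$, density of $E_1'+E_3'+N_2\supset E_1+E_2$, and the identities in (3) follow from $E_1=E_1'\oplus_{alg}N_2$ and $E_3=E_3'\oplus_{alg}N_2$, and $(N_2;N_2,0,N_2)$ is plainly distributive (its subspace lattice is the two-element chain $\{0,N_2\}$). The one genuinely delicate step is the construction of the third paragraph: engineering $E_1'$ so that $V_0$ avoids $N_2$, which is what makes $E_3'=V_0\cap E_3$ an honest algebraic complement of $N_2$ in $E_3$ and lets the pentagon and distributive parts actually split off; everything else is bookkeeping with the identity $E_3\cap(E_1+E_2)=E_2\oplus_{alg}N_2$ and the finite-codimension of $E_2$.
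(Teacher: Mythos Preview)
Your proof is correct, and your identification $N_2=E_1\cap E_3$ together with the identity $E_3\cap(E_1+E_2)=E_2\oplus_{alg}N_2$ coincide exactly with the paper's $N_2=[v_1,\dots,v_n]$; case (ii) is essentially identical to the paper's argument. Case (i), however, is organised differently. The paper works on the $E_3$ side first: it sets $F_3:=E_3\cap(E_3\cap(E_1+E_2))^{\perp}$, proves directly that $E_1\cap(E_2+F_3)=0$, and then simply takes $E_3':=E_2\oplus F_3$ and $E_1':=E_1\cap N_2^{\perp}$, verifying linear independence of $N_2,E_1',E_2,F_3$. There is no need to control $\overline{E_1'+E_2}\cap N_2$, because $E_3'$ is built orthogonally inside $E_3$ rather than as an intersection with a closure $V_0$. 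Your route---engineering $E_1'$ via the projection $P$ onto $E_2^{\perp}$ so that $V_0=\overline{E_1'+E_2}$ misses $N_2$, and then putting $E_3':=V_0\cap E_3$---is more elaborate, but it has the pleasant by-product that the ambient pentagon space is explicitly $V_0=E_1'\vee E_2'$, so the pentagon axiom $E_1'\vee E_2'=\text{ambient}$ is automatic; in the paper's setup one must still see why $E_1'\vee E_2'=E_1'\vee E_3'$.

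One small slip: in arguing $E_3'\neq V_0$ you write ``else $E_1=E_1'+N_2\subset V_0$'', but you have already shown $V_0\cap N_2=0$, so $N_2\not\subset V_0$ unless $N_2=0$. The fix is immediate: if $E_3'=V_0$ then $V_0\subset E_3$, hence $E_1'\subset E_1\cap E_3=N_2$, so $E_1'\subset E_1'\cap N_2=0$, contradicting $E_1'\neq 0$.
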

\begin{proof} 
Case {\rm (i)}: Assume that $E_3 \not= E_3 \cap (E_1 + E_2)$.

Put $F_3 = E_3 \cap (E_3 \cap (E_1 + E_2))^{\perp} \not= 0$. Then 
$E_3 = ( E_3 \cap (E_1 + E_2)) \oplus F_3$.  
Since $E_3 \supset E_2$, $F_3$ is orthgonal to $E_2$. We shall show that 
$$
E_1 \cap (E_2 + F_3) = 0
$$
In fact, let $x_1 = x_2 + f_3 \in E_1 \cap (E_2 + F_3)$ for 
$x_1 \in E_1$, $x_2 \in E_2$ and $f_3 \in F_3$.  Then 
$f_3 = x_1-x_2 \in E_3 \cap (E_1 + E_2)$. But $f_3$ is also in  
$(E_3 \cap (E_1 + E_2))^{\perp}$. Hence $f_3 =0$. Then $x_1 = x_2$ 
is in $E_1 \cap E_2 = 0$. Therefore $x_1 = x_2 = 0$.

Since $E_3/E_2$ is finite dimensional, we can find 
$$
u_1, \dots, u_n 
\in E_3 \cap (E_1 + E_2) (\supset E_2)
$$ 
such that the quotient image 
$\overline{u_1}, \dots, \overline{u_n}$ are linearly independent in 
the quotient $E_3/E_2$ and 
$$
E_3 \cap (E_1 + E_2) = E_2 + [u_1, \dots, u_n], 
$$
where $[u_1, \dots, u_n]$ is a linear span of $u_1, \dots, u_n$. 
In particular, $E_3 \cap (E_1 + E_2)$ is closed in $H$.  
Choose $v_1, \dots, v_n \in E_1$ and   $w_1, \dots, w_n \in E_2$ 
such that 
$$
u_k = v_k + w_k \ \ \ (k = 1, \dots, n).
$$
Since the quotient image $\overline{u_k} =\overline{v_k}$ for 
$k = 1, \dots, n$, we have that $v_1, \dots, v_n$ are 
linearly independent.

Put $N_2 = [v_1, \dots, v_n]$. Since $E_2 \subset E_3$, 
$v_k = u_k -w_k$ is also in $E_3$. Hence $N_2 \subset E_1 \cap E_3$. 
Since  $w_1, \dots, w_n$ are in $E_2$ , we have that 
$$
E_3 \cap (E_1 + E_2) = E_2 + [v_1, \dots, v_n], 
$$
Put $E_1' = E_1 \cap [v_1, \dots, v_n]^{\perp}$, $E_2' = E_2$ and 
$E_3' = E_2 \oplus F_3$. Then 
$$
E_3 = (E_3 \cap (E_1 + E_2)) \oplus F_3 = (E_2 + [v_1, \dots, v_n]) \oplus F_3.  
$$
We shall show that $N_2, E_1', E_2$ and $F_3$ are linearly independent. 
In fact, let $n_2 + x_1 + x_2 + f_3 = 0$  for 
$n_2 \in N_2, x_1 \in E_1', x_2 \in E_2$ and $f_3 \in F_3$.  Then 
$$
n_2 + x_1 = -x_2  -f_3 \in E_1 \cap (E_2 + F_3) = 0. 
$$
Therefore $n_2 + x_1 = 0$ and $x_2  + f_3 =0$. Since 
$N_2 \perp E_1'$ and $F_3 \perp E_2$, we have that 
$n_2 =x_1 = x_2  = f_3 = 0$.  
Since 
$$
E_1' \cap E_3' = E_1' \cap (E_2' + F_3)\subset E_1 \cap (E_2 + F_3) = 0, 
$$
$(E_1'\vee E_3' ;E_1',E_2',E_3')$  
forms a pentagon  (with $E_3' \supset E_2'$). 
And  
$E_1 = \overline{E_1' \oplus_{alg}  N_2 }$, 
$E_2 = E_2'$ and 
$E_3 = \overline{E_3' \oplus_{alg}  N_2 }$. \\
\noindent
Case {\rm (ii)}: Assume that $E_3 = E_3 \cap (E_1 + E_2)$. 
Then $F_3 = E_3 \cap (E_3 \cap (E_1 + E_2))^{\perp} = 0$. 
We can similarly define $u_k,v_k,w_k$ as above. 
Put $N_2 = [v_1, \dots, v_n]$.
Define $M_1 = E_1 \cap [v_1, \dots, v_n]^{\perp}$, $N_1 = E_2$. 
Then $E_1 = N_2 \oplus M_1$ and 
$E_3  = N_1 + N_2$. Moreover $N_1 + N_2 + M_1 = E_2 + E_1$ is 
dense in $H$. And $N_1, N_2$ and $M_1$ are linearly independent. In fact, 
let $n_1 + n_2 + m_1 = 0$  for $n_1 \in N_1$,$n_2 \in N_2$ and 
$m_1 \in M_1$. Then $n_2 + m_1 = -n_1 \in E_1 \cap E_2 = 0$. 
Hence $n_2 + m_1 = n_1 = 0$. Since $N_2$ and $M_1$ are orthogonal, 
$n_2 = m_1 = 0$.  Hence 
$$
H\supset N_1 \oplus_{alg} N_2 \oplus_{alg} M_1 \text{ (dense) }, 
$$ 
\end{proof}

\bigskip
\noindent{\bf Example 9}.Let $K = \ell^2({\mathbb N})$ be the 
Hilbert space of square summable sequences. Let 
$A: K \rightarrow K$  be a diagonal operator such that 
$(Ax)_n = \frac{1}{n} x_n$ for $x=(x_n)_n \in K$. 
Then $\Im A$ is dense in $K$ and not equal to $K$. Put 
$f = (1,1/2,1/3,\dots, 1/n, \dots  ) \in K$ and 
$v = (0,1/2,1/3,\dots, 1/n, \dots ) \in K$. 
Then $f$ and $v$ are not in $\Im A$. Define 
$N_2 = {\mathbb C}(0,v)$, 
 $H = K \oplus K$, $E_1 = (K \oplus 0) + N_2$, 
$E_2 = \{(x, Ax) | x \in K\}$ and  
$E_3 = E_2 + {\mathbb C}(0,f) + N_2$. Put
$E_1' = K \oplus 0$, $E_2' = \{(x, Ax) | x \in K\}$ and 
$E_3' = E_2 + {\mathbb C}(0,f)$.Then 
$(E_1' \vee E_3';E_1',E_2',E_3')$  forms a pentagon 
 (with $E_3' \supset E_2'$).

\end{document}